\newtheorem{thm}{Theorem}[]
\theoremstyle{remark}
 \newcommand\bmx{\bm{\mathcal{X}}}
 \newcommand\bmxt{\mathscr{X}_{[T]}}
 \newcommand\bmgt{\mathscr{G}_{[T]}}
 \newcommand\bmg{\bm{\mathcal{G}}}
  \newcommand\bmw{\bm{\mathcal{W}}}
  \newcommand\bme{\bm{\mathcal{E}}}
 \newcommand\bmu{\bm{U}}
 \newcommand\upt{^{(T)}}
 \newcommand\uptone{^{(T+1)}}
  \newcommand\upto{^{(T_0)}}
 \newcommand\downtone{_{[T+1]}}
 \newcommand\upaaw{^{\text{(AAW)}}}
 \newcommand\downt{_{[T]}}
 \newcommand\setx{\mathscr{X}}
 \newcommand\setg{\mathscr{G}}
 \newcommand\setu{\mathscr{U}}
 \newcommand\setp{\mathscr{P}}
 \newcommand\setxt{\mathscr{X}\downt}
 \newcommand\setgt{\mathscr{G}\downt}
 \newcommand\setgtone{\mathscr{G}\downtone}
\begin{document}

\title{Efficient Online Prediction for High-Dimensional Time Series via Joint Tensor Tucker Decomposition}

\author{\name Zhenting Luan \email zhenting.luan@polyu.edu.hk \\
\name Defeng Sun \email defeng.sun@polyu.edu.hk\\
\addr Department of Applied Mathematics\\
The Hong Kong Polytechnic University\\
Hung Hom, Kowloon, Hong Kong
\AND
\name Haoning Wang \email whn22@mails.tsinghua.edu.cn\\
\name Liping Zhang\thanks{Corresponding author.} \email lipingzhang@tsinghua.edu.cn\\
\addr Department of Applied Mathematics\\
Tsinghua University, Beijing, China
}

\editor{My editor}

\maketitle

\begin{abstract}
Real-time prediction plays a vital role in various control systems, such as traffic congestion control and wireless channel resource allocation. In these scenarios, the predictor usually needs to track the evolution of the latent statistical patterns in the modern high-dimensional streaming time series continuously and quickly, which presents new challenges for traditional prediction methods. This paper is the first to propose a novel online algorithm (TOPA) based on tensor factorization to predict streaming tensor time series. The proposed algorithm TOPA updates the predictor in a low-complexity online manner to adapt to the time-evolving data. Additionally, an automatically adaptive version of the algorithm (TOPA-AAW) is presented to mitigate the negative impact of stale data. Simulation results demonstrate that our proposed methods achieve prediction accuracy similar to that of conventional offline tensor prediction methods, while being much faster than them during long-term online prediction. Therefore, TOPA-AAW is an effective and efficient solution method for the online prediction of streaming tensor time series.
\end{abstract}

\begin{keywords}
High-dimensional time series, streaming data, online prediction, Tucker decomposition, alternating minimization method, autoregression
\end{keywords}

\section{Introduction}

Time series is a sequence of discrete data points observed over time, which is generated naturally in various areas such as economics, climate, and traffic \citep{kwon2021slicenstitch,shih2019temporal}. The prediction of time series data is crucial in numerous real-world scenarios such as stock market analysis \citep{stockprediction} and traffic flow \citep{introtrafficprediction}. For example, the prediction of traffic is the core component of intelligent transportation systems, including route planning, traffic control and car dispatching \citep{li2018brief}.
Conventional methods for time series prediction usually depend on autoregression (AR) models \citep{ar}, e.g., autoregressive integrated moving average (ARIMA) model \citep{arima} and vector autoregressive (VAR) model \citep{var}, to handle scalar or vector batch data.

However, with the emergence of large-scale time series data generated in modern production activities, classical prediction methods face challenges posed by modern sensing technologies. The modern time series data are usually super large-scale and detected from numerous sensors. For instance, the freeway traffic Performance Measurement System  of the California Department of Transportation is a sensor network of more than 26000 individual-lane inductive loops installed throughout California freeways, with more than 35000 detectors that send traffic measures to a road control unit per 30 seconds \citep{pems,bayes}. Classical methods are not efficient and reliable for handling such large-scale time series. Moreover, modern time series data are often multi-dimensional with different characteristics shown in different dimensions, such as the channel state information in time, spatial, and frequency domains detected by periodic sounding reference signals \citep{csi_intro}.

With the rapid development of sensing technologies, \textit{streaming time series} data are collected continuously from various types of sensors, such as the traffic flow measurement in the traffic congestion control system \citep{streamingtraffic} and the periodic channel state information (CSI) in wireless channel resource allocation \citep{streamingcsi}. Accurate real-time prediction is beneficial for the system to make timely decisions in such scenarios. However, conventional prediction methods must re-exploit the entire dataset, including the newly observed data, to rebuild the prediction models in each process of predicting the upcoming data, which is unsuitable for high-frequency streaming time series.
For example, in mobile scenarios, wireless channels change rapidly. So we're going to see a rapid mismatch of the external channels that rely on pilot estimation. As a result, the performance of interference suppression algorithms is severely degraded. It is crucial to use the prediction algorithm to obtain the channel of the channel at non-pilot time and improve the difference between the estimated value and the actual channel, improve the system performance \citep{kim2020massive}.
Additionally, due to the evolution of statistical patterns in streaming time series data, predicting the upcoming data over an extended period is improper without prediction model updating.
Therefore, online prediction is essential to help forecast upcoming data and make immediate decisions continuously and rapidly.

Recent advances in tensor research have led to the development of prediction models  \citep{ahn2021accurate,wang2024high} for tensor time series (TTS) data with high dimensions based on tensor factorization such as Tucker decomposition and CP decomposition \citep[see][]{de2000multilinear,view,moar}. These methods explore the joint low-rank structure of all observed data, where the statistical patterns or the periodicity of data are utilized for generating predictors. The tensor factorization strategy  \citep[see][]{matrix_factorization} is a generalization of matrix factorization, which has been shown to be very efficient in large-scale multi-variable time series.  \citet{moar} developed a multi-linear prediction model, MOAR, based on Tucker decomposition, which projected the original tensors into several subspaces and built temporal connections among the core tensors by the traditional autoregressive model. The constrained version of MOAR, called MCAR, regularizes the residual of the joint tensor decomposition to find a more stable solution.  \citet{bhtarima} proposed a prediction algorithm that combined tensor decomposition and ARIMA model for the block-Hankelized TTS data, which exploited the low-rank structure of TTS and captured the intrinsic correlations among it. Both of these approaches demonstrate strong performance for single-time predictions by constructing predictors from scratch with numerous iterative iterations. However, they are not suitable for direct application in streaming TTS online prediction due to the necessity of repeating the process of constructing predictors. \citet{completionprediction} presented a short-term traffic flow prediction approach for predicting streaming traffic data based on dynamic tensor completion (DTC). This approach regarded the data to be predicted as some missing data in the corner of a multi-dimensional tensor. It used DTC to complete the tensor to obtain the missing data for prediction. However, this process depends on the multi-mode periodicity of traffic flow data, and hence is unsuitable for generic TTS data without periodicity.

From the above mentioned, using traditional forecasting methods to deal with such scenarios often faces the dual challenges of computational complexity and multi-dimensional feature mining. A natural question is how to  design an accurate and fast online prediction algorithm for such high-dimensional tensor time series.

In this paper, we design an algorithm--- Tucker-decomposition based
Online Prediction Algorithm ({TOPA})--- for predicting generic streaming TTS data in an online manner, which allows us to quickly update the predictors from the previous model and the latest observations rather than rebuild it from scratch. At each sampling time during the streaming observation, we employ the Tucker decomposition for dimension reduction and joint feature subspace extraction of the streaming TTS, and find the auto-regression pattern in the dimension-reduced TTS (the core tensor series). Then, the prediction results are obtained from the predicted core tensor together with the inverse Tucker decomposition.  We design a regularized optimization problem with a least-squared form to search for the proper Tucker decomposition and regression model. In order to quickly solve the optimization and update the previous predictor, we introduce an online updating scheme for the optimization, of which the initialization is exactly the solution to the previous optimization problem formulated at the last time point. Moreover, for streaming data in which the statistical pattern evolves over a long period, we propose an automatically-adaptive-weight (AAW) strategy for processing past data during online prediction, which can reduce the cumulative error of the predictor from stale data and automatically decrease the weights of very noisy data. The improved version of TOPA is called TOPA-AAW.

In summary, the main contribution of this paper is to propose an online prediction algorithm for streaming TTS based on tensor factorization, which
includes the following specific aspects.
\begin{itemize}
    \item The gradually varied joint tensor factorization structure of streaming TTS is updated by the proposed online updating strategy for tracking the changing underlying dynamic statistical characteristics. Compared with conventional joint tensor factorization methods, this strategy takes much less time owing to only a few alternative iterations with an inherited initialization.
    \item  We design an online updating algorithm TOPA with inherited solutions to quickly update the joint tensor factorization structure once observing new data. The prediction accuracy of this algorithm is close to offline methods \citep[e.g.][]{moar,bhtarima}, while the time complexity is significantly lower. The performance of TOPA is validated in numerical simulations, including synthetic and real-world datasets. We also analyze the convergence of our proposed algorithms.
    \item To reduce the interference from stale data to the latest statistical characteristics modeling, we propose an automatically adaptive regularization strategy TOPA-AAW for the introduced least-squared optimization problem. The TOPA-AAW can gradually decrease the weights of historical and abnormal data.
\end{itemize}

The paper is organized as follows. In Section \ref{section_preliminaries}, we define the online prediction problem and introduce the joint tensor factorization strategy for TTS. In Section \ref{section_ostp}, we propose the TOPA and introduce the online manner for predicting streaming TTS data. In order to address the problem of data staleness, we add automatically adaptive weights for regularizing the joint tensor factorization and propose TOPA-AAW in Section \ref{section_aawostp}. In Section \ref{section_experiment}, we evaluate the performance of our proposed algorithms in different scenarios and compare it with some tensor offline prediction methods and neural-network-based methods. Finally, conclusions are drawn in Section \ref{sectioncon_clusion}.

\section{Preliminaries}\label{section_preliminaries}
In this section, we first give notation and tensor operators used in this paper. We next to define the online prediction problem and introduce the joint tensor factorization strategy for TTS.

\subsection{Notation and Tensor Operators} We employ bold italic lower-case letters, bold italic capital letters, and bold  calligraphic letters to denote vectors, matrices, and tensors, respectively (e.g., $\bm{x},\bm{X}$ and $\bmx$).  For a positive integer $n$,  denote the set  $[n]=\{1,2,\ldots,n\}$. We use uppercase letters with script typestyle to denote the set of data (e.g., $\mathscr{G}$). Furthermore, we represent the time series with given length $T$ by uppercase letters with script typestyle equipped with a subscript $[T]$ (e.g., $\mathscr{G}_{[T]}$).  Denote $\bm{I}_k$ as the $k\times k$ identity matrix. We use $(\cdot)^H$ to denote the conjugate transpose of a complex matrix. The set of unitary matrices with given scale $I\times R (I>R)$ is denoted as $$\mathbb{U}^{I\times R}=\{\bm{U}\in\mathbb{C}^{I\times R}|\bm{U}^H\bm{U}=\bm{I}_{R}\}.$$
The Frobenius norm of a tensor $\bmx$ is defined as $\lVert\bmx\rVert_F=\sqrt{\Braket{\bmx,\bmx}}$, where $\Braket{\cdot,\cdot}$ is the inner product. Moreover, the Frobenius norm of a tuple of tensor $\mathcal{X}=(\bmx_1,\ldots,\bmx_n)$ is defined as $\lVert\mathcal{X}\rVert_F=\sqrt{\sum_{i=1}^n\lVert\bmx_i\rVert_F^2}$.
Each dimension of a tensor is called a \textit{mode}. The \textit{$m$-mode product $\times_m$} between a tensor $\bmx\in \mathbb{C}^{I_1\times\cdots \times I_M }$ and a matrix $\bmu_m\in \mathbb{C}^{J_m\times I_m}$ is defined as
\begin{equation*}\label{modeproduct}
(\bmx\times_m \bmu_m)_{i_1\ldots i_{m-1}ji_{m+1}\ldots i_{M}}=\sum_{i_m=1}^{I_m}\bmx_{i_1\ldots i_m\ldots i_M}\bmu_{ji_m}
\end{equation*}
for $\forall j\in [J_m], \forall i_l\in [I_l], \forall l\in [M]\backslash\{m\}.$ The \textit{$m$-mode unfolding matrix} of a tensor is recorded as $\bmx_{(k)}\in \mathbb{C}^{I_m\times(\prod_{l\neq m}I_l)}$. Denote  $R(\bmx)=(rank(\bmx_{(1)}),\cdots,rank(\bmx_{(M)}))$  as the \textit{Tucker-rank} of an $M$th-order tensor $\bmx$.

\subsection{Problem Formulation}\label{subsection_problemdefinition}
Denote $\bmx_t\in \mathbb{C}^{I_1\times\cdots \times I_M}$ as the observation at sampling time $t$ and $\setxt:=\{\bmx_1,\cdots,\bmx_T\}$ as the TTS until sampling time $T$.
Given $\setxt$, the prediction of ${\bmx}_{T+1}$ is to find a predictor $h_T(\cdot)$ that minimizes the mean squared error (MSE) as
\begin{equation}\label{general_model}
    \min\quad \lVert \bmx_{T+1}-h_T(\setxt) \rVert_F,
\end{equation}
where $\hat{\bmx}_{T+1} := h_T(\setxt)$ is the prediction value of $\bmx_{T+1}$.

In this paper, we consider the online prediction problem \eqref{general_model} for streaming TTS, in which the prediction function $h_T(\cdot)$ changes as streaming observations $\bmx_T$ arrive sequentially. Therefore, the previous predictor should be updated with the latest observation for the next prediction at each sampling time. This dynamic process is referred to as \textit{online prediction}.

For high-dimensional streaming TTS data, the curse of dimensionality makes data processing challenging. To mitigate this challenge and explore the multi-aspect temporal continuity of TTS, we introduce the \textit{joint tensor factorization} strategy in the next subsection, which motivates us to develop low-complexity algorithms for online prediction.

\subsection{Joint Tensor Factorization Strategy}\label{subsection_review}
Tensor factorization is a powerful approach for compressing high-dimensional data \citep{view}. In this subsection, we provide a brief overview of this approach, which has been shown to be effective in predicting non-streaming TTS \citep{moar,wang2024high}.

Owing to the temporal continuity of TTS, the TTS data share similar multi-aspect feature subspaces, which can be extracted by Tucker decomposition \citep[see][]{view}. The Tucker decomposition factorizes a higher-order tensor into a core tensor multiplied by a set of projection matrices along each mode. The core tensor captures the essential information of the original tensor, while the projection matrices represent the feature subspaces. In this paper, we utilize joint Tucker decomposition to compress the original TTS into a lower-dimensional representation while preserving the important features. Specifically, the joint Tucker decomposition finds the $(R_1, \cdots, R_M)$-rank approximation of given TTS with joint projection matrices, which is formulated as:
\begin{equation}\label{mcar_tucker}
    {\bmx}_t\approx\bmg_t\upt\prod_{m=1}^M\times_m\bmu_m\upt,\quad  t\in [T],
\end{equation}
where $\bmg_t\upt\in \mathbb{C}^{R_1\times\ldots \times R_M}$ is the \textit{core tensor} corresponding to $\bmx_t$, and $\bm{U}_m\upt\in\mathbb{U}^{I_m\times R_m}$ is considered as the $m$-th joint feature subspace of $\setxt$.

As discussed in \citet{moar},  the core tensors obtained via joint Tucker decomposition can be viewed as a compact representation of the intrinsic interactions among multiple feature subspaces of TTS. This representation is more suitable for modeling temporal continuity than the original data.   Moreover,  the size of core tensors is much smaller than the original TTS, which makes it computationally more efficient to apply AR models to the core tensor series. For instance, we can utilize AR$(p)$ model to illustrate the temporal correlations among the core tensor series, which is formulated as
\begin{equation}\label{mcar_ar}
    \bmg_t\upt=\sum_{i=1}^p\alpha_i\upt\bmg\upt_{t-i}+\bm{\mathcal{E}}_t\upt, \quad p+1\leq t\leq T,
\end{equation}
where $\{\alpha_i\upt\}$ are AR parameters and $\bm{\mathcal{E}}\upt_t$ is the white Gaussian noise in $\bmx_t$. With \eqref{mcar_ar}, the core tensor $\hat{\bmg}_{T+1}\upt$ of next observation data is predicted by
\begin{equation}\label{mcar_core_predict}
\hat{\bmg}_{T+1}\upt=\sum_{i=1}^p\alpha\upt_i\bmg\upt_{T+1-i}.
\end{equation}
Thus, the prediction of upcoming data is given by extending the temporal continuity of multi-aspect feature subspace to the next time point:
 \begin{equation}\label{mcar_data_predict}
 \hat{\bmx}_{T+1}=\hat{\bmg}_{T+1}\upt\prod_{m=1}^M\times_m\bmu_m\upt.
 \end{equation}

When dealing with streaming TTS,  traditional \textit{offline prediction} methods, such as MCAR in \citet{moar} and BHT-ARIMA in \citet{bhtarima}, suffer from high computational complexity as they require finding a new predictor for \eqref{general_model} after each observation without inheriting any information from previous predictors. This results in a time-consuming process of repeatedly solving the model during the streaming observation. To address this issue, we propose an \textit{online prediction} method TOPA for \eqref{general_model} with the tensor-factorization strategy. The proposed TOPA can quickly find new predictors by updating the previous one with the latest observation, thereby reducing computational complexity.

\section{TOPA for Streaming TTS }\label{section_ostp}

In this section, we introduce a novel two-stage online prediction algorithm for \eqref{general_model} called \textit{Tucker-Decomposition-based Online Prediction Algorithm} (TOPA).
The first stage aims to find an initial predictor using starting TTS. This stage can be seen as a generalization of MCAR in \citet{moar} with a simplified optimization problem.
The second stage is the main focus of TOPA, which continuously updates the predictor and predicts the upcoming data as the streaming observation arrives. We online update the predictor by solving an incremental optimization problem with the previous predictor as initialization. At the end of this section, we analyze the convergence and computational complexity of TOPA.

\subsection{Stage \uppercase\expandafter{\romannumeral1}: Initial Predictor with Starting TTS}\label{subsection_starting}

In order to find a suitable initial predictor before receiving streaming data, the observer needs to observe a starting TTS as the initial input for TOPA. It is assumed that this starting TTS comprises $T_0$ samples and the streaming observation starts from time $T_0+1$.

By treating the starting TTS as a non-streaming TTS, we utilize the joint tensor-factorization strategy introduced in Section \ref{subsection_review} to create the initial predictor.
The joint Tucker decomposition of starting TTS is formulated as \eqref{mcar_tucker} with $T=T_0$, which estimates the joint feature subspaces of TTS as $\setu\upto\triangleq\{\bmu_m\upto\}_{m\in [M]}$ and the core tensor series as $\setg_{[T_0]}\upto\triangleq\{\bmg_t
\upto\}_{t\in [T_0]}$.  To be more generalized, we denote $f_{\setp\upto}(\cdot)$ as any desired AR-type model for $\setg_{[T_0]}\upto$, e.g., VAR model and ARIMA model, which is formulated as
\begin{equation}\label{initial_f}
    {\bmg}_{t}\upto\approx f_{\setp\upto}(\setg\upto_{[t-1]}), \quad t\in [T_0],
\end{equation}
where $\setp\upto$ represents all regression parameters in $f$ and $\setg_{[t]}\upto$ are the first $t$ core tensors of the starting TTS. For example, $\setp\upt=\{\alpha_i\upt\}_i$ in \eqref{mcar_ar}.

\begin{remark}
Notice that the order of AR model, denoted $p(<T_0)$, defines the number of past data that have direct temporal correlations with the current data, i.e.,  $$f_{\setp\upto}(\setg\upto_{[t-1]})=f_{\setp\upto}( {\bmg}_{t-p}\upto,\cdots, {\bmg}_{t-1}\upto).$$ For convenience, we still use $f_{\setp\upto}(\setx_{[t]})$ to represent the model with a little abuse of notation when there is no ambiguity about the order.
\end{remark}

With the tensor factorization and regression, TOPA  predicts $\bmx_{T_0+1}$ by inversely projecting the predicted core tensor with the joint feature subspaces:
\begin{align}
    \label{initial_prediction}   \hat{\bmx}_{T_0+1}&=\hat{\bmg}_{T_0+1}\prod_{m=1}^M\times_m\bmu_m\upto   =f_{\setp\upto}\left(\setg\upto_{[T_0]}\right)\prod_{m=1}^M\times_m\bmu_m\upto\nonumber\\
    &=:h_{T_0}(\setx_{[T_0]}),
\end{align}
where $h_{T_o}(\cdot)$ depends on $\setu\upto, \setg\upto_{[T_0]}$ and $\setp\upto$.
Define
\begin{align*}
F_{T_0}\left(\setg_{[T_0]}\upto,\setp\upto,\setu\upto\right)=&
\sum_{t=p+1}^{T_0}\lVert\bmg_{t}\upto-f_{\setp\upto}(\setg_{[t-1]}\upto)\rVert^2_F\\
&\quad+\varphi\sum_{t=1}^{T_0}\lVert\bmg_t\upto-\bmx_t\prod_{m=1}^M\times_m(\bmu\upto_m)^H\rVert^2_F,
\end{align*}
where the first term is used to minimize the error of core tensor series regression, and the second term is used to regularize the joint Tucker decomposition of the starting TTS with a residual minimization formulation and regularization parameter $\varphi>0$.
By \eqref{initial_prediction}, we can reformulate \eqref{general_model} as the following optimization problem:
 \begin{align}
\mathop{\min}_{\setg_{[T_0]}\upto,\setp\upto,\setu\upto} & F_{T_0}\left(\setg_{[T_0]}\upto,\setp\upto,\setu\upto\right)\nonumber \\
\mbox{s.t.}\quad\quad &  \bmu\upto_m\in\mathbb{U}^{I_m\times R_M}, ~~~\forall m\in [M]. \label{optim_initial}
\end{align}

According to the multi-variable and least-squared structure of problem \eqref{optim_initial}, we propose a proximal alternating minimization algorithm to solve it. In this algorithm, we solve a series of subproblems with a proximal regularized term \citep[see][]{proximal_nonconvex} of \eqref{optim_initial} in an alternating manner, with each subproblem fixing all variables except the one being updated. That is to say, it alternatively updates only one of $\setp\upto, \setu\upto$, and $\setg_{[T_0]}\upto$ at a time. Fortunately, each subproblem with respect to $\setp\upto, \setu\upto$, and $\setg_{[T_0]}\upto$  has closed-form solution. We give the updated derivation process in details.

\textbf{Update regression parameters:} For AR model, the subproblem of updating $\setp\upto=\{\bm{\alpha}\upto=[\alpha_1\upto,\cdots,\alpha_p\upto]\}$ with proximal term and step size $\lambda$ is formulated as
    \begin{equation}\label{update_p}
    \begin{aligned}
        \bm{\alpha}_{k+1}\upto & =\mathop{\arg\min}_{\bm{\alpha}}F_{T_0}\left(\setg_{[T_0],k}\upto,\{\bm{\alpha}\},\setu\upto_k\right) + \frac{\lambda}{2}\lVert\bm{\alpha}-\bm{\alpha}_k\upto\rVert_F^2 \\
        & = \mathop{\arg\min}_{\bm{\alpha}}\sum_{t=p+1}^{T_0}\lVert\bmg_{t,k}\upto-\sum_{i=1}^p\alpha_i\bmg\upto_{t-i,k}\rVert^2_F  + \frac{\lambda}{2}\lVert\bm{\alpha}-\bm{\alpha}_k\upto\rVert_F^2. \\
    \end{aligned}
    \end{equation}
 The closed-form solution to \eqref{update_p} is
\begin{equation}\label{update_p_result}
     \bm{\alpha}_{k+1}\upto = \left(\bm{R}+\frac{\lambda}{2}\bm{I}_p\right)^{-1}\left(\bm{r}+\frac{\lambda}{2}\bm{\alpha}_k\upto\right),
\end{equation}
where
\begin{equation}\label{R_matrix_formula}
    \bm{R}_{i,j}=\sum_{t=p+1}^{T_0}\Braket{\bmg_{t-i,k}\upto,\bmg_{t-j,k}\upto}, \quad 1\leq i,j\leq p,
\end{equation}
and
\begin{equation}\label{r_vector_formula}
    \bm{r}_{i}=\sum_{t=p+1}^{T_0}\Braket{\bmg_{t-i,k}\upto,\bmg_{t,k}\upto}, \quad 1\leq i\leq p,
\end{equation}

\begin{remark}
    When $T_0$ is large enough, the entries of $\bm{R}$ and $\bm{r}$ can be seemed as approximations of the autocorrelation function of $\setg_{[T_0],k}\upto$:
$$r(j-i)=\mathbb{E}\Braket{\bmg_t,\bmg_{t+j-i}}.$$
Then, we have
$$\bm{R}_{i,j}\approx r(i-j), \quad \bm{r}_{i}\approx r(I).$$
With these approximations, when $\lambda=0$, \eqref{update_p_result} is the solution to the known Yule-Walker equation \citep{ywequation}.
\end{remark}

\textbf{Update joint projection matrices }$\setu\uptone$: For~$m=1$~to~$M$, the subproblem of updating $\bmu\upto_m$ with proximal term and step size $\lambda$ is formulated as
\begin{align}\label{update_u}
\bmu\upto_{m,k+1}
=&\mathop{\arg\min}_{{\bmu}\in\mathbb{U}^{I_m\times R_m}}F_{T_0}\Big(\setg_{[T_0],k}\upto,\setp\upto_{k+1},
\{{\bmu}_{i,k+1}\upto\}_{i=1}^{m-1}\cup\{\bmu\}\cup\{\bmu_{j,k}\upto\}_{j=m+1}^M\Big) \nonumber\\
& \qquad\qquad\qquad+ \frac{\lambda}{2}\lVert \bmu-\bmu_{m,k}\upto \rVert_F^2 \nonumber\\
=&\mathop{\arg\min}_{{\bmu}\in\mathbb{U}^{I_m\times R_m}}\varphi\sum_{t=1}^{T_0}\lVert\bmg_{t,k}\upto-\bm{\mathcal{H}}_t\times_m\bmu^H\rVert^2_F  + \frac{\lambda}{2}\lVert \bmu-\bmu_{m,k}\upto \rVert_F^2 \nonumber\\
=&\mathop{\arg\max}_{{\bmu}\in\mathbb{U}^{I_m\times R_m}} \Re \text{trace}\left(\bmu^H\left(\sum\limits_{t=1}^{T_0}(\bm{\mathcal{H}}_t)_{(m)}(\bmg_{t,k}\upto)_{(m)}^H+\frac{\lambda}{2\varphi}\bmu_{m,k}\upto\right)\right),
\end{align}
where the last equality holds by utilizing the orthogonal constraints $\bmu^H\bmu=\bm{I}_{R_m}$, the symbol $\Re \text{trace}(\bmx)$ denotes the trace of the real part of $\bmx$,  and \begin{equation}\label{HM}
\bm{\mathcal{H}}_t=\bmx_{t}\prod_{i<m}\times_i({\bmu}_{i,k+1}\upto)^H\prod_{j>m}\times_j(\bmu_{j,k}\upto)^H.
\end{equation}

With orthogonal constraints, the closed-form solution of \eqref{update_u} is
\begin{equation}\label{update_u_result}
    \bmu\upto_{m,k+1} = \bm{L}_{m,k+1}\bm{R}_{m,k+1}^H,
\end{equation}
where $\bm{L}_{m,k+1}\in\mathbb{U}^{I_m\times R_m}$ and $\bm{R}_{m,k+1}\in\mathbb{U}^{R_m\times R_m}$ are the left and right singular matrices of \begin{equation}\label{lr}
   \sum\limits_{t=1}^{T_0}(\bm{\mathcal{H}}_t)_{(m)}(\bmg_{t,k}\upto)_{(m)}^H+\frac{\lambda}{2\varphi}\bmu_{m,k}\upto.
\end{equation}

\textbf{Update core tensor series ~}$\setg_{[T_0]}\upto$:  For~$t=1$~to~$T_0$, the subproblem of updating $\bmu\upto_m$ with proximal term and step size $\lambda$ is formulated as
\begin{align}\label{update_g}
{\bmg}_{t,k+1}\upto=\mathop{\arg\min}_{{\bmg}_t}F_{T_0}\Big(
\setg_{[t-1],k+1}\upto&\cup\{{\bmg}_t\}\cup
\{\bmg_{s,k}\upto\}_{s=t+1}^{T+1}
,\setp\upto_{k+1},\setu\upto_{k+1}\Big)\nonumber\\& + \frac{\lambda}{2}\lVert \bmg_t-\bmg_{t,k}\upto \rVert^2_F.
\end{align}
Since \eqref{update_g} is a quadratic optimization problem, when the gradient of the objective function with respect to $\bmg_t$ vanishes, we can obtain the solution to \eqref{update_g}:
\begin{equation}\label{update_g_result1}
\bmg_{t,k+1}\upto=
\dfrac{1}{\varphi+\frac{\lambda}{2}}\left[\varphi\bmx_{t}\prod_{m=1}^M\times_m({\bmu}_{m,k+1}\upto)^H
+\dfrac{\lambda}{2}\bmg_{t,k}\upto\right]\quad \text{if $t\leq p$},
\end{equation}
and
\begin{equation}\label{update_g_result2}
\bmg_{t,k+1}\upto=
\dfrac{1}{1+\varphi+\frac{\lambda}{2}}\Big[f_{\setp\upto_{k+1}}(\setg_{[t-1],k+1}\upto)+
\varphi\bmx_{t}\prod_{m=1}^M\times_m({\bmu}_{m,k+1}\upto)^H+\dfrac{\lambda}{2}\bmg_{t,k}\upto\Big]\quad \text{if $t> p$}.
\end{equation}
As $t$ traverses $[T+1]$, $\setg_{[T_0],k}\upto$ is updated to $\setg_{[T_0],k+1}\upto:=\{{\bmg}_{t,k+1}
\upto\}_{t\in [T]}$.

According to \eqref{HM} and \eqref{lr}, define
\begin{equation}\label{xmk}
\bmw^{(T_0)}=\sum\limits_{t=1}^{T_0}\left(\bmx_{t}\prod\limits_{i<m}\times_i({\bmu}_{i,k+1}\upto)^H
\prod\limits_{j>m}\times_j(\bmu_{j,k}\upto)^H\right)_{(m)}\times\left(\bmg_{t,k}\upto\right)_{(m)}^H
+\frac{\lambda}{2\varphi}\bmu_{m,k}\upto.
\end{equation}
Then, we summarize this stage of TOPA in  Algorithm \ref{algorithm_init}. In each iteration, we first update the regression model of the core tensor series in terms of current iteration results of core tensors (line $2$ in Algorithm \ref{algorithm_init}), and then update the joint projection matrices and core tensor series in the joint Tucker decomposition structure (line $3\! -\! 6$ and line $7\! -\! 12$, respectively).

\begin{algorithm}
 \caption{TOPA - Stage \uppercase\expandafter{\romannumeral1}}\label{algorithm_init}
 \begin{algorithmic}[1]
 \renewcommand{\algorithmicrequire}{\textbf{Input:}}
 \renewcommand{\algorithmicensure}{\textbf{Output:}}
 \REQUIRE TTS $\setx_{[T_0]}$, and autoregressive model $f$, step size $\lambda>0$, tolerance $\epsilon>0$ and iteration number $k=0$
 \ENSURE Core tensor series $\setg_{[T_0]}\upto$, autoregressive parameters $\setp\upto$ and joint projection matrices $\setu\upto$
     \\\hrulefill
\\  Initialize joint projection matrices  $\setu\upto_0=\{\bmu\upto_{m,0}\}_{m=1}^M$ randomly.
 \\ Initialize core tensor series:
 \\ \quad $\setg_{[T_0],0}\upto=\{\bmg_{t,0}\upto:=\bmx_t\prod_{m=1}^M\times_m(\bmu_{m,0}\upto)^H\}_{t=1}^{T_0}$.

\WHILE{$k\geq 0$ }
 \STATE Compute regression parameters $\setp\upto_{k+1}$ for $f$ with $\setg_{[T_0],k}\upto$ via solving subproblem \eqref{update_p}.
\FOR{$m=1:M$}
\STATE Compute  the left and right singular matrices $\bm{L_{m,k+1}}$ and $\bm{R_{m,k+1}}$ respectively of $\bmw^{(T_0)}$ defined by \eqref{xmk}.
  \STATE Update ${\bmu}\upto_{m,k+1}$ via \eqref{update_u_result}.
  \ENDFOR
  \FOR{$t=1:p$}
  \STATE Update $\bmg_{t,k+1}\upto$ via \eqref{update_g_result1}.
\ENDFOR
  \FOR{$t=p+1:T_0$}
  \STATE Update $\bmg_{t,k+1}\upto$ via \eqref{update_g_result2}.
\ENDFOR
\IF{$\lVert\setg_{[T_0],k+1}\upto- \setg_{[T_0],k}\upto \rVert_F^2 +\lVert  \setu_{k+1}\upto- \setu_k\upto \rVert_F^2 + \lVert\setp_{k+1}\upto- \setp_k\upto \rVert_F^2<\epsilon $}
\STATE Break;
\ENDIF
\STATE $k\leftarrow k+1$
 \ENDWHILE
 \RETURN $\setp\upto\!=\!\setp\upto_k,~~~~\setu\upto\!=\!\setu\upto_k, ~~~~ \setg_{[T_0]}\upto\!=\!\setg_{[T_0],k}\upto$
 \end{algorithmic}
 \end{algorithm}

By executing Algorithm \ref{algorithm_init}, we can build an initial predictor \eqref{initial_prediction} for the starting TTS, which is the basis for the subsequent online prediction stage.

\subsection{Stage \uppercase\expandafter{\romannumeral2}: Online Predictor Updating and Prediction}\label{subsection_online}

In this stage, new streaming TTS data arrive in sequence. After each data sampling, we update the previous predictor online and then predict the upcoming TTS data with the updated predictor.
The online manner relies on the same assumption of temporal continuity of considered TTS as discussed in Section \ref{subsection_review}. In this stage, we repeat \textit{online updating} and \textit{prediction} for online prediction, as illustrated in Figure \ref{fig_flow1}, where $T_0$ in \eqref{update_p}, \eqref{update_u_result} and \eqref{update_g} should be replaced by $T+1$. We list the main two steps as follows.

\begin{figure*}[!h]
    \centering    \includegraphics[width=0.85\textwidth,height=\textheight,keepaspectratio]{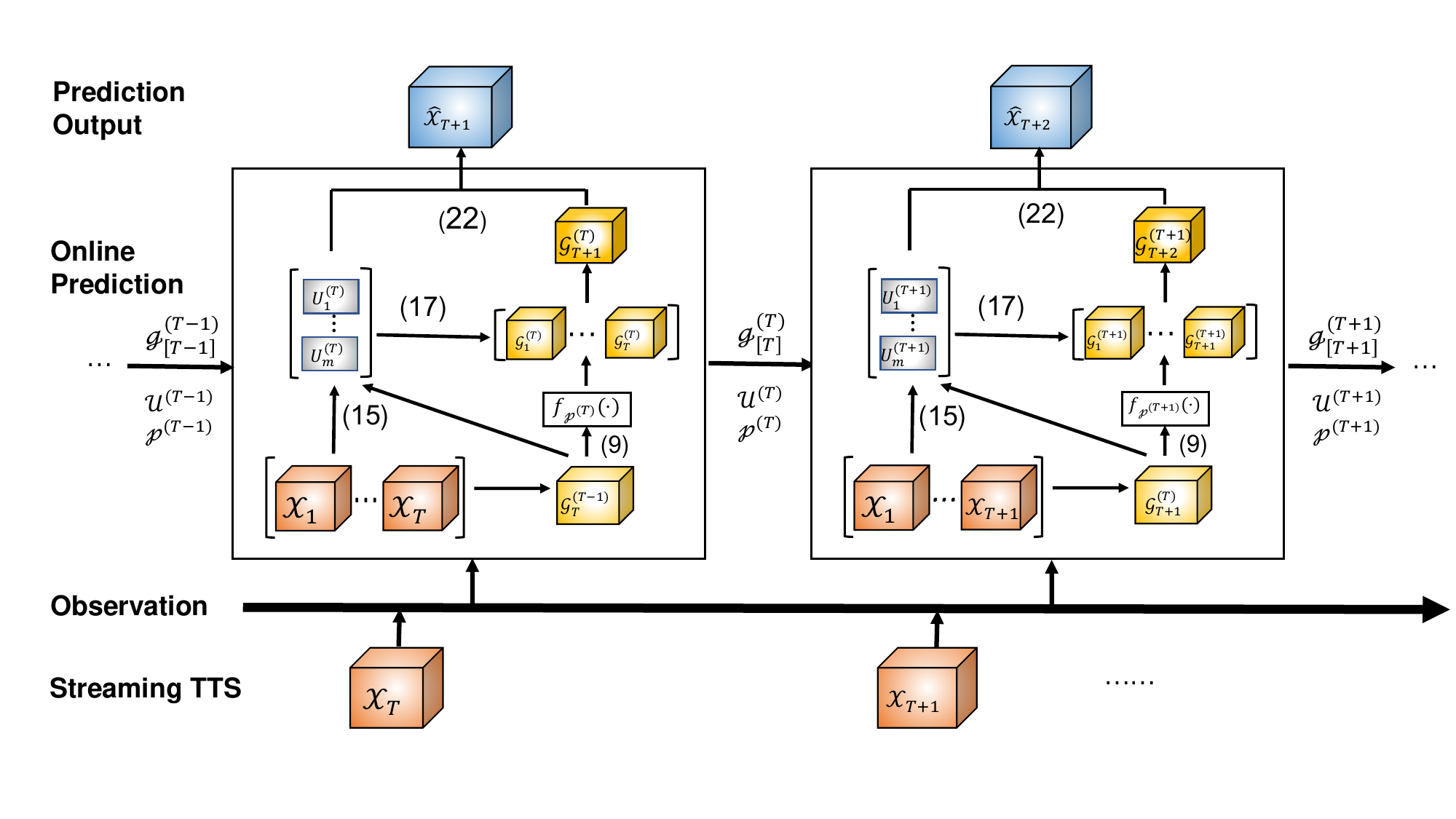}\vspace{-3mm}
    \caption{Stage \uppercase\expandafter{\romannumeral2} of TOPA for Streaming TTS}
    \label{fig_flow1}
\end{figure*}

\underline{\textbf{Online updating step:}} At time $T+1(>T_0)$, new TTS data $\bmx_{T+1}$ arrives. Similar to Stage \uppercase\expandafter{\romannumeral1}, the process of finding new predictor $h_{T+1}(\cdot)$ can be formulated as
 \begin{align}\label{optim_updating}
\mathop{\min}_{\setg\downtone\uptone,\setp\uptone,\atop\setu\uptone} & F_{T+1}\left(\setg\downtone\uptone,\setp\uptone,\setu\uptone\right) \nonumber\\
\mbox{s.t.}\quad \quad &  \bmu\uptone_m\in\mathbb{U}^{I_m\times R_M}, \quad \forall m\in [M],
\end{align}
where
\begin{align*}
F_{T+1}\left(\setg\downtone\uptone,\setp\uptone,\setu\uptone\right)=&
\sum_{t=p+1}^{T+1}\lVert\bmg_{t}\uptone-f_{\setp\uptone}(\setg_{[t-1]}\uptone)\rVert^2_F\nonumber\\
&\quad +\varphi\sum_{t=1}^{T+1}\lVert\bmg_t\uptone-\bmx_t\prod_{m=1}^M\times_m(\bmu\uptone_m)^H\rVert^2_F.
\end{align*}

Under the assumption of temporal continuity of TTS, the joint feature subspaces among TTS data and core tensor series change smoothly as observation continues \citep{moar}. Moreover, compared to $F_{T}(\cdot)$, $F_{T+1}(\cdot)$ contains only two additional terms that relate to $\bmx_{T+1}$. Thus the change in the objective function for the online optimization problem is relatively small. When $h_T(\cdot)$ is accurate enough, we can employ it as the initialization for solving \eqref{optim_updating}. Specifically, we take  $\bmgt\upt$, $\setu\upt$ and $\setp\upt$ as the initialization of (\ref{optim_updating}), and use an alternative updating scheme to solve (\ref{optim_updating}) in closed forms. The details of alternative updating are shown in the online updating step of Algorithm \ref{algorithm_main}. The updates of $\setg\downtone\uptone$, $\setp\uptone$, and $\setu\uptone$ are similar to \eqref{update_p_result}, \eqref{update_u_result}, \eqref{update_g_result1} and \eqref{update_g_result2}. We omit the process of update here.

\underline{\textbf{Online prediction step: }} After online updating the predictor with new observation, TOPA predicts the upcoming tensor data at time $T+2$ by
\begin{align}
    \label{online_prediction}   \hat{\bmx}_{T+2}&=f_{\setp\uptone}\left(\setg\uptone_{[T+1]}\right)\prod_{m=1}^M\times_m\bmu_m\uptone\nonumber\\
    &=:h_{T+1}(\setx_{[T+1]}).
\end{align}

In the practice of our proposed online manner, we only need to run a few iterations to find a predictor accurate enough, owing to the inheritance of the previous solution. In contrast, offline prediction algorithms, such as those discussed in \cite{moar} and \cite{bhtarima}, need to perform multiple iterations with random initialization to solve \eqref{optim_updating}, which is inefficient and repetitive during streaming observations. The simulations in Section \ref{section_experiment} demonstrate that our online updating scheme performs as well as those offline methods, even with just one iteration.

\begin{algorithm}
 \caption{TOPA - Stage \uppercase\expandafter{\romannumeral2}}\label{algorithm_main}
 \begin{algorithmic}[1]
 \renewcommand{\algorithmicrequire}{\textbf{Input:}}
 \renewcommand{\algorithmicensure}{\textbf{Output:}}
 \REQUIRE Streaming TTS $\bmxt$ from $T=T_0$, autoregressive model $f$,  step size $\lambda>0$, and tolerance $\epsilon>0$
 \ENSURE Online prediction results $\{\hat{\bmx}_{t}\}_{t=T_0+2}^\infty$
 \\\hrulefill
  \WHILE{ \textbf{observing new data $\bmx_{T+1}$ at time $T+1$:} }
  \STATE $k\leftarrow 0$
  \STATE\underline{\textbf{step 1: online updating}}  \\
  \STATE  $\bmg_{T+1}\upt:=\frac{f_{\setp\upt}(\setgt\upt)+\varphi\bmx_{T+1}\prod\limits_{m=1}\limits^M\times_m(\bmu_m\upt)^H}{1+\varphi}$ \\
  \STATE $\setp\uptone_k\!=\!\setp\upt,\setu\uptone_k\!=\!\setu\upt, \setg_{[T+1],k}\uptone\!=\!\setgtone\upt$
  \WHILE{$k\geq 0$ }
  \STATE Compute regression parameters $\setp\uptone_{k+1}$ for $f$ with $\setg_{[T+1],k}\uptone$ via solving \eqref{update_p}.
\FOR{$m=1:M$}
\STATE Compute  the left and right singular matrices $\bm{L_{m,k+1}}$ and $\bm{R_{m,k+1}}$ respectively
of $\bmw^{(T+1)}$ defined as \eqref{xmk}.
  \STATE Update ${\bmu}\uptone_{m,k+1}\leftarrow\bm{L}_{m,k+1}\bm{R}_{m,k+1}^H$
  \ENDFOR
  \FOR{$t=1:p$}
  \STATE $\bmg_{t,k+1}\uptone\leftarrow\dfrac{\varphi\bmx_{t}\prod_{m=1}^M\times_m({\bmu}_{m,k+1}\uptone)^H+\frac{\lambda}{2}\bmg_{t,k}\uptone}{\varphi+\frac{\lambda}{2}}.$
\ENDFOR
  \FOR{$t=p+1:T+1$}
  \STATE  $\bmg_{t,k+1}\uptone\leftarrow\dfrac{f_{\setp\uptone_{k+1}}(\setg_{[t-1],k+1}\uptone)+\varphi\bmx_{t}\prod\limits_{m=1}\limits^M\times_m({\bmu}_{m,k+1}\uptone)^H+\frac{\lambda}{2}\bmg_{t,k}\uptone}{1+\varphi+\frac{\lambda}{2}}.$
\ENDFOR
\IF{$\lVert\setg_{[T+1],k+1}\uptone- \setg_{[T+1],k}\uptone \rVert_F^2 +\lVert  \setu_{k+1}\uptone- \setu_k\uptone \rVert_F^2 + \lVert\setp_{k+1}\uptone- \setp_k\uptone \rVert_F^2<\epsilon $}
\STATE Break;
\ENDIF
\STATE $k\leftarrow k+1$ \\
  \ENDWHILE \\
  \STATE $\setp\uptone\!=\!\setp\uptone_k,~~~\setu\uptone\!=\!\setu\uptone_k,~~~ \setgtone\uptone\!=\!\setg_{[T+1],k}\uptone$\\
  \hrulefill \\
  \underline{\textbf{step 2: online prediction}}\\
  \STATE \quad\quad
  $\hat{\bmx}_{T+2}=f_{\setp\uptone}(\setg\downtone\uptone)\prod_{m=1}^M\times_m\bmu_m^{(T+1)}.$
 \RETURN $\hat{\bmx}_{T+2}$
 \STATE $T\leftarrow T+1$\\
 \ENDWHILE
 \end{algorithmic}
 \end{algorithm}

\subsection{Convergence and Complexity Analysis}\label{subsection_complexity}

In this subsection, we analyze the convergence and complexity of TOPA. Since the online updating process of Algorithm \ref{algorithm_main} is similar to Algorithm \ref{algorithm_init}, we only provide  convergence analysis for Algorithm \ref{algorithm_init} here.

Denote the sequence generated by Algorithm \ref{algorithm_init} as $\left\{\mathcal{W}_k\upto=\left(\setg_{[T_0],k}\upto,\setp\upto_k,\setu_k\upto\right)\right\}$. Let $\mathcal{W}_*\upto=\left(\setg_{[T_0],*}\upto,\setp\upto_*,\setu_*\upto\right)$ be any of its accumulation point. Let $N(\bmu\upto_{m,*})$ be the normal cone  of $\mathbb{U}^{I_m\times R_m}$ at $\bmu\upto_{m,*}$, as defined in \citet{rock_variational}. Then, we have the following convergence result.

\begin{thm}\label{thm_convergence}
Assume that the starting TTS and the regression parameter sequence $\{\setp_k\upto\}_k$ generated by Algorithm \ref{algorithm_init} are bounded. Then, for any limit points $\mathcal{W}_*\upto$ of the sequence $\left\{\mathcal{W}_k\upto\right\}$, we have
\begin{equation}\label{gradient_converge_pg}
    \frac{\partial F_{T_0}\left(\mathcal{W}_*\upto\right)}{\partial \setp\upto} = \bm{0}, \quad
    \frac{\partial F_{T_0}\left(\mathcal{W}_*\upto\right)}{\partial \setg\upto_{[T_0]}} = \bm{0},
\end{equation}
and
\begin{equation}\label{gradient_converge_u}
  -\frac{\partial F_{T_0}\left(\mathcal{W}_*\upto\right)}{\partial \bmu_{m}\upto} \in N(\bmu\upto_{m,*}).
\end{equation}
In other words, any limit point of the iteration sequence generated by Algorithm \ref{algorithm_init} is the stationary point of \eqref{optim_initial}.
\end{thm}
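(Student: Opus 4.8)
The plan is to treat Algorithm \ref{algorithm_init} as a proximal alternating minimization scheme in the sense of \citet{proximal_nonconvex} and to establish the three standard ingredients: a sufficient-decrease property, summability of successive iterate differences, and convergence of the per-block first-order optimality conditions along a convergent subsequence. First I would record that every block update is the \emph{exact} minimizer of its subproblem augmented by the proximal term $\frac{\lambda}{2}\lVert\cdot-\cdot_k\rVert_F^2$. Comparing the objective value at the minimizer with its value at the previous iterate yields, for each of the blocks $\setp\upto$, $\{\bmu_m\upto\}$ and $\{\bmg_t\upto\}$, a decrease of at least $\frac{\lambda}{2}\lVert(\cdot)_{k+1}-(\cdot)_k\rVert_F^2$. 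Chaining these Gauss--Seidel decreases over one full sweep (update $\setp\upto$, then the $\bmu_m\upto$, then the $\bmg_t\upto$) gives the sufficient-decrease inequality
\begin{equation*}
F_{T_0}(\mathcal{W}_{k+1}\upto)\le F_{T_0}(\mathcal{W}_{k}\upto)-\tfrac{\lambda}{2}\lVert\mathcal{W}_{k+1}\upto-\mathcal{W}_{k}\upto\rVert_F^2.
\end{equation*}

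Next I would exploit $F_{T_0}\ge 0$ (it is a sum of squared Frobenius norms) to conclude that $\{F_{T_0}(\mathcal{W}_k\upto)\}$ is nonincreasing and bounded below, hence convergent; telescoping the sufficient-decrease inequality then forces $\sum_k\lVert\mathcal{W}_{k+1}\upto-\mathcal{W}_k\upto\rVert_F^2<\infty$, so in particular $\lVert\mathcal{W}_{k+1}\upto-\mathcal{W}_k\upto\rVert_F\to 0$. Boundedness of the whole sequence---needed for limit points to exist---follows from the hypotheses: the $\bmu_m\upto$ lie on the compact Stiefel manifold $\mathbb{U}^{I_m\times R_m}$, the $\setp_k\upto$ are bounded by assumption, and the estimate $F_{T_0}(\mathcal{W}_k\upto)\le F_{T_0}(\mathcal{W}_0\upto)$ together with boundedness of the starting TTS bounds $\lVert\bmg_t\upto\rVert_F$ through the residual term (since $\bmx_t\prod_m\times_m(\bmu_m\upto)^H$ is bounded and the triangle inequality applies). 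Bolzano--Weierstrass then guarantees a convergent subsequence $\mathcal{W}_{k_j}\upto\to\mathcal{W}_*\upto$, and because consecutive iterates collapse, $\mathcal{W}_{k_j+1}\upto\to\mathcal{W}_*\upto$ as well.

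The core of the argument is to pass to the limit in the first-order optimality conditions of the three subproblems. For the smooth blocks I would write the stationarity of the exact minimizers,
\begin{equation*}
\nabla_{\setp}F_{T_0}(\cdot)+\lambda(\setp_{k+1}\upto-\setp_k\upto)=\bm 0,\qquad \nabla_{\bmg_t}F_{T_0}(\cdot)+\lambda(\bmg_{t,k+1}\upto-\bmg_{t,k}\upto)=\bm 0,
\end{equation*}
where the gradients are evaluated at the Gauss--Seidel points mixing $k$- and $(k+1)$-blocks. For each constrained block I would invoke the variational characterization of a minimizer over $\mathbb{U}^{I_m\times R_m}$,
\begin{equation*}
-\Big(\nabla_{\bmu_m}F_{T_0}(\cdot)+\lambda(\bmu_{m,k+1}\upto-\bmu_{m,k}\upto)\Big)\in N(\bmu_{m,k+1}\upto).
\end{equation*}
Letting $j\to\infty$, the proximal increments vanish, continuity of the gradients sends every mixed evaluation point to the common limit $\mathcal{W}_*\upto$, and \eqref{gradient_converge_pg} follows for the $\setp$- and $\setg$-blocks directly.

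The main obstacle is the constrained block: passing to the limit in the normal-cone inclusion requires outer semicontinuity of the set-valued map $\bmu\mapsto N(\bmu)$ over the Stiefel manifold. I would invoke the fact that $\mathbb{U}^{I_m\times R_m}$ is a smooth (hence Clarke-regular, prox-regular) embedded manifold, so $N(\bmu)$ coincides with its normal space, varies continuously with $\bmu$, and the graph of the normal-cone map is closed \citep[see][]{rock_variational}; combined with $\bmu_{m,k_j+1}\upto\to\bmu_{m,*}\upto$ and the established convergence of the left-hand side, this yields \eqref{gradient_converge_u}. The only remaining point needing care is the bookkeeping of the alternating evaluation points, which is handled uniformly once $\lVert\mathcal{W}_{k+1}\upto-\mathcal{W}_k\upto\rVert_F\to0$ is in hand, since then all intermediate block configurations share the single limit $\mathcal{W}_*\upto$.
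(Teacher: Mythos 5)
Your proposal is correct and follows essentially the same route as the paper's own proof: the Gauss--Seidel sufficient-decrease inequality from the exact proximal subproblem minimizers, telescoping to get summability of $\lVert\mathcal{W}_{k+1}\upto-\mathcal{W}_k\upto\rVert_F^2$ and hence vanishing successive differences, boundedness of the core tensors via the residual term, and passage to the limit in the per-block optimality conditions, with the normal-cone inclusion for the $\bmu_m\upto$-blocks handled by outer semicontinuity of $N(\cdot)$ on the Stiefel manifold (the paper cites Theorem 8.15 and Proposition 6.6 of \citet{rock_variational} for exactly these two steps). No substantive differences.
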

\begin{proof}
Denote $\mathcal{W}\upto_k = (\setg_{[T_0],k}\upto, \setp_k\upto, \setu_k\upto)$.  In terms of the alternative manner  of Algorithm \ref{algorithm_init}, we have
\begin{equation}\label{proof_inequality}
\begin{aligned}
          F_{T_0}\left(\setg_{[T_0],k}\upto,\setp_k\upto,\setu\upto_k\right)
    {\geq} & F_{T_0}\left(\setg_{[T_0],k}\upto,\setp_{k+1}\upto,\setu\upto_k\right) + \frac{\lambda}{2}\lVert  \setp_{k+1}\upto- \setp_k\upto \rVert_F^2 \\
    {\geq} & F_{T_0}\left(\setg_{[T_0],k}\upto,\setp_{k+1}\upto,\setu\upto_{k+1}\right) +
    \frac{\lambda}{2}\lVert  \setp_{k+1}\upto- \setp_k\upto \rVert_F^2 \\
    &\qquad\qquad\qquad+\frac{\lambda}{2}\lVert  \setu_{k+1}\upto- \setu_k\upto \rVert_F^2 \\
    {\geq } & F_{T_0}\left(\setg_{[T_0],k+1}\upto,\setp_{k+1}\upto,\setu\upto_{k+1}\right) +  \frac{\lambda}{2}\lVert  \mathcal{W}_{k+1}\upto- \mathcal{W}_k\upto \rVert_F^2,
\end{aligned}
\end{equation}
where the three inequalities are due to the optimization subproblems \eqref{update_p}, \eqref{update_u}, and \eqref{update_g}, respectively. Therefore, via recursion, we have
\begin{equation}\label{proof_inequality_k}
 F_{T_0}\left(\setg_{[T_0],0}\upto,\setp_0\upto,\setu\upto_0\right)
    \geq  F_{T_0}\left(\setg_{[T_0],k}\upto,\setp_{k}\upto,\setu\upto_{k}\right) + \sum\limits_{i=1}^k \frac{\lambda}{2}\lVert  \mathcal{W}_{i}\upto- \mathcal{W}_{i-1}\upto \rVert_F^2.
\end{equation}

For any $k$ and $t\leq T_0$, with the boundedness of $\bmx_{t}$ and ${\bmu}_{m,k}\upto$, we have
\begin{equation}\label{g_bound}
\begin{aligned}
        &\lVert \bmg_{t,k}\upto \rVert_F^2 \leq \lVert \bmg_{t,k}\upto - \bmx_{t}\prod_{m=1}^M\times_m({\bmu}_{m,k}\upto)^H\rVert_F^2 + 2\lVert\bmx_{t}\rVert_F^2 \\
    \leq & 2F_{T_0}\left(\setg_{[T_0],k}\upto,\setp_{k}\upto,\setu\upto_{k}\right) + 2\lVert\bmx_{t}\rVert_F^2 \\
    \leq & 2F_{T_0}\left(\setg_{[T_0],0}\upto,\setp_{0}\upto,\setu\upto_{0}\right) + 2\lVert\bmx_{t}\rVert_F^2.
\end{aligned}
\end{equation}
Hence $\{\setg_{[T_0],k}\upto\}_k$ is bounded. Therefore, $\{\mathcal{W}\upto_k\}$ is bounded and has at least one limit point.

With \eqref{proof_inequality_k}, we have
\begin{equation}\label{series_w}
    \sum\limits_{i=1}^{+\infty}\lVert \mathcal{W}_{i}\upto- \mathcal{W}_{i-1}\upto \rVert_F^2 < \infty,
\end{equation}
which implies
\begin{equation}\label{w_converge}
    \mathcal{W}_{k+1}\upto- \mathcal{W}_{k}\upto \rightarrow \bm{0}.
\end{equation}

For any limit point $\mathcal{W}_*\upto=(\setg_{[T_0],*}\upto,\setp\upto_*,\setu_*\upto)$ of $\{\mathcal{W}_k\upto\}$, there exists a subsequence $\{\mathcal{W}_{k_i}\upto\}_i$ that converges to $\mathcal{W}_*\upto$. With \eqref{w_converge}, we have $\mathcal{W}_{k_i+1}\upto\rightarrow\mathcal{W}_*\upto$.

For any $t$ and $i$, since $\bmg_{t,k_i+1}\upto$ is the solution to \eqref{update_g} with index $k_i$, we have
\begin{equation}\label{zerogradient_g}
\frac{\partial F_{T_0}}{\partial \bmg_{t}\upto}\big(
\setg_{[t-1],k_i+1}\upto\cup\{\bmg_{t,k_i+1}\upto\} \cup
\{\bmg_{s,k_i}\upto\}_{s=t+1}^{T+1}
,\setp\upto_{k_i+1},\setu\upto_{k_i+1}\big) + \lambda\left( \bmg_{t,k_i+1}\upto -  \bmg_{t,k_i}\upto  \right) = \bm{0}.
\end{equation}
Let $i\rightarrow +\infty$ in \eqref{zerogradient_g}, we have
\begin{equation}\label{zerogradient_g_limit}
     \frac{\partial F_{T_0}\left(\mathcal{W}_*\upto\right)}{\partial \bmg\upto_{t}} = \bm{0}.
\end{equation}
Similarly, we have
\begin{equation}\label{zerogradient_p_limit}
     \frac{\partial F_{T_0}\left(\mathcal{W}_*\upto\right)}{\partial \setp\upto} = \bm{0}.
\end{equation}
Then, \eqref{gradient_converge_pg} is proved.

For any $m$ and $i$, since $\bmu_{m,k_i+1}\upto$ is the solution to \eqref{update_u} with index $k_i$, by Theorem 8.15 in \citet{rock_variational}, we have
\begin{equation}\label{zerogradient_u}
\begin{aligned}
   -\frac{\partial F_{T_0}}{\partial \bmu_{m}\upto}&\Big(\setg_{[T_0],k_i}\upto,\setp\upto_{k_i+1},\{{\bmu}_{i,k_i+1}\upto\}_{i=1}^{m-1}\cup\{\bmu_{m,k_i+1}\upto\}\cup\{\bmu_{j,k_i}\upto\}_{j=m+1}^M\Big) \\
   & - \lambda\left(\bmu_{m,k_i+1}\upto-\bmu_{m,k_i}\upto\right) \in N(\bmu_{m,k_i+1}\upto).
\end{aligned}
\end{equation}
Let $i\rightarrow +\infty$, with Proposition 6.6 in \citet{rock_variational} and \eqref{zerogradient_u}, we obtain
\begin{equation}\label{zerogradient_u_limit}
\begin{aligned}
   -\frac{\partial F_{T_0}}{\partial \bmu_{m}\upto}\left(\mathcal{W}_*\upto\right) \in N(\bmu_{m,*}\upto).
\end{aligned}
\end{equation}
Hence, \eqref{gradient_converge_u} holds.
\end{proof}

The computational complexity of TOPA-Stage \uppercase\expandafter{\romannumeral2} is dominated by updating joint projection matrices and core tensor series. For each iteration in online prediction, the total computational complexity is  $O\left(2MT\Bar{I}^M\Bar{R}+M(M-1)T\Bar{R}^M\Bar{I}\right)$, where $\Bar{I}=(\prod_{m\in[M]}I_m)^{1/M}$ and $\Bar{R}=(\prod_{m\in[M]}R_m)^{1/M}$ are the geometric average of the scale of TTS data and core tensors, respectively.

For details, we list the computational complexity of each iteration during online prediction by TOPA in Table \ref{comlexity_ostp}. Here we denote the total order of AR model $f$ is $p$, which is assumed much less than the scale of tensor data.

\begin{table}[!ht]
\centering
    \caption{Computational Complexity of Stage \uppercase\expandafter{\romannumeral2} in TOPA}\label{comlexity_ostp}
\begin{tabular}{cc}
\hline
Step  & Computational Complexity \\ \hline
Line $4$ of TOPA-Stage \uppercase\expandafter{\romannumeral2}   &      $O\left(M\Bar{I}^M\Bar{R}\right)$                    \\
Computing $\setp\uptone_{k+1}$  &     $O\left(p^3\Bar{R}^M\right)$                     \\
\eqref{update_u_result}   &     $O\left(MT((M-1)\Bar{I}^M\Bar{R}+\Bar{R}^M\Bar{I})\right)$                     \\
\eqref{update_g}   &   $O\left(T(M\Bar{I}^M\Bar{R}+p\Bar{R}^M)\right)$                         \\
\hline
Total &    $O\left(M^2T\Bar{I}^M\Bar{R}+MT\Bar{R}^M\Bar{I}\right)$                         \\ \hline
\end{tabular}
\end{table}

\section{TOPA-AAW: TOPA with Automatically Adaptive Weights }\label{section_aawostp}

In real-world scenarios, when streaming TTS data are observed continuously over a relatively long period, the effectiveness of stale data in revealing the latest statistical patterns in streaming TTS gradually diminishes. This leads to an increasing cumulative prediction error, such as wireless CSI \citep{streamingcsi}, in which the intrinsic statistical patterns evolve rapidly as observations continue, and the data becomes outdated quickly. To address this issue, we propose an automatically adaptive weight (AAW) regularization method with a time sliding window $\tau$ for modifying \eqref{optim_updating}. The AAW gradually decreases the weights of stale data to reduce their impacts on prediction accuracy. To further eliminate the interference caused by heavily noisy data, we introduce an automatic factor in AAW to reduce the weights of heavily noisy data.

The optimization problem for online prediction, once $\bmx_{T+1}$ is observed at time $T+1$, is modified as:
 \begin{align}\label{optim_aaw}
\mathop{\min}_{\setp\uptone,\setu\uptone,\atop
\setg\downtone\uptone}& F\upaaw_{T+1}(\setg\downtone\uptone,\setp\uptone,\setu\uptone)\nonumber\\
\mbox{s.t.}\qquad\quad &  \bmu\uptone_m\in\mathbb{U}^{I_m\times R_M}, \quad \forall m\in [M],
\end{align}
where
\begin{equation*}
\begin{aligned}
F\upaaw_{T+1}(\setg\downtone\uptone,\setp\uptone,\setu\uptone)=&
\sum_{t=T-\tau+2}^{T+1}\Big(\lVert\bmg_{t}\uptone-f_{\setp\uptone}(\setg_{[t-1]}\uptone)\rVert^2_F\\
&\qquad+
\varphi\omega_t\uptone\lVert\bmg_t\uptone-\bmx_t\prod_{m=1}^M\times_m(\bmu\uptone_m)^H\rVert^2_F\Big),
\end{aligned}
\end{equation*}
and  $\omega_t\uptone$ is the AAW
\begin{equation}\label{aaw}
    \omega_t\uptone=\begin{cases}
(1-\alpha^{t-(T-\tau+1)})\max\{\beta,1-\epsilon_t\uptone\},  &T-\tau+2\leq t\leq T,\\
1,&t=T+1
\end{cases}
\end{equation}
with Tucker-decomposition residual error
\begin{equation}
     \epsilon_t\uptone=\frac{\lVert\bmx_t-{\bmg}_t\uptone\prod_{m=1}^M\times_m{\bmu}_m\uptone\rVert_F^2}{\lVert\bmx_t\rVert_F^2}.
\end{equation}
Here $\alpha\in (0,1)$ is the damping parameter for reducing the accumulative prediction error from stale data. The `max' factor is inversely proportional to the residual error $\epsilon_t\uptone$ of joint tensor decomposition for streaming TTS in the last prediction, while $\beta\in (0,1)$ is the minimum residual factor.

\begin{remark}\label{remark_beta}
The parameter $\beta$ is configured as a threshold for checking the deviation of the joint Tucker decomposition for each data in TTS, which can help prevent the data with significant decomposition residual errors from being entirely discarded. For stable TTS with gently evolving joint subspaces, such as two real-world datasets in Section \ref{subsection_ushcn}, $\beta$ can be broadly chosen from 0 to a positive number close to 1 since the threshold is inactive. For unstable TTS with fast-evolving joint subspaces, such as the wireless channel discussed in Section \ref{subsection_synthetic}, $\beta$ should be set properly to deal with fast-fading or noisy channel states. In addition, we specifically discuss the effects of choice of $\alpha$ and $\varphi$ in Section \ref{subsection_ushcn}.
\end{remark}

Since the structure of \eqref{optim_aaw} is similar to \eqref{optim_updating} except for the sliding time window and AAW, we solve \eqref{optim_aaw} in a similar online manner as Algorithm \ref{algorithm_main}. With AAW, the online updating step in Algorithm \ref{algorithm_main} is modified as follows, while the prediction step is the same.

\textbf{Estimate New Core Tensor} (line 4 in Algorithm \ref{algorithm_main}):
\begin{equation*}
{\bmg}_{T+1}\upt=\frac{1}{1+\varphi\omega_{T+1}\uptone}\left(f_{\setp\upt}(\bmgt\upt)+
\varphi\omega_{T+1}\uptone\bmx_{T+1}\prod_{m=1}^M\times_m(\bmu_m\upt)^H\right).
\end{equation*}

\textbf{Update Projection Matrices} (line 8$-$11 in Algorithm \ref{algorithm_main}): In $(k+1)$-th iteration, ~{for}~$m=1$~{to}~$M$,
$${\bmu}_{m,k+1}\uptone=\bm{L}_{m,k+1}(\bm{R}_{m,k+1})^H,$$
where $\bm{L}_{m,k+1}$ and $\bm{R}_{m,k+1}$ are the left and right singular matrices of
\begin{equation*}
\sum_{t=T-\tau +2}^{T+1}\omega_t\uptone\Big(\bmx_{t}\prod\limits_{i<m}\times_i({\bmu}_{i,k+1}\uptone)^H\prod\limits_{j>m}\times_j(\bmu_{j,k}\uptone)^H\Big)_{(m)} \times\left(\bmg_{t,k}\uptone\right)_{(m)}^H+\frac{\lambda}{2\varphi}\bmu_{m,k}\uptone,
\end{equation*}
respectively.

\textbf{Update core tensor series} (line 12$-$17 in Algorithm \ref{algorithm_main}):  In $(k+1)$-th iteration, ~{for}~$t=T-\tau+2$~{to}~$T+1$,
\begin{equation*}
\bmg_{t,k+1}\uptone = \frac{1}{1+\varphi\omega_t\uptone+\frac{\lambda}{2}}\Big[f_{\setp\uptone_{k+1}}(\setg_{[t-1],k+1}\uptone) +\varphi\omega_t\uptone\bmx_{t}\prod_{m=1}^M\times_m({\bmu}_{m,k+1}\uptone)^H+\frac{\lambda}{2}\bmg_{t,k}\uptone\Big].
\end{equation*}

With the time sliding window, TOPA-AAW further reduces the computational complexity to $\tau/T$ times that of TOPA. In terms of Table \ref{comlexity_ostp}, the total computational complexity of each iteration in TOPA-AAW is
 $O\left(M^2\tau\Bar{I}^M\Bar{R}+M\tau\Bar{R}^M\Bar{I}\right)$.

 \section{Numerical Experiments}\label{section_experiment}

In this section, we evaluate the performance of TOPA in various scenarios, including synthetic low-rank TTS, wireless channel simulations, and two real-world datasets. All numerical experiments are conducted using MATLAB R2018b on a Windows PC with a quad-core Intel(R) Core(TM) 2.0GHz CPU and 8 GB RAM.

To the best of our knowledge, TOPA is the first method for online prediction of generic streaming TTS. In order to provide a reference for comparing the performance of TOPA, we also conduct five other non-online prediction methods:
\begin{itemize}
    \item \textbf{BHT-ARIMA} \citep{bhtarima} and \textbf{MCAR} \citep{moar}: These are two effective offline one-shot prediction methods for TTS. We run these offline methods using the same online observations at each sampling time as the online methods by building their predictors from scratch with multiple iterations to forecast the next tensor data.  BHT-ARIMA employs BHT, an Hankelized tensor expansion technique, to further exploit the temporal relationships among TTS and uses a structure similar to MCAR to formulate predictors. The offline methods perform sufficient iterations until convergence in each prediction, thereby revealing good prediction performance.
    \item \textbf{TOPA-init}:
To illustrate the importance of the online updating for predictors, we test the performance of TOPA without model updating in some experiments. In other words, we use the initial predictor obtained from Algorithm \ref{algorithm_init} and the streaming observations to forecast upcoming data at each sampling time.
    \item \textbf{LSTM} \citep{hochreiter1997long} and \textbf{GMRL} \citep{deng2023learning}: These are two neural-network-based prediction methods. Long Short-Term Memory (LSTM) is a classical time-series forecasting method in the deep learning area, which utilizes a type of recurrent neural network (RNN) architecture that is designed to process and retain information over long sequences. GMRL is a latest work based on neural networks equipped with the tensor decomposition framework.
\end{itemize}

   When comparing the time costs of LSTM and GMRL, we did not account for the training time costs of neural networks, though it may cost much time in practice. On the other hand, owing to the predictor inheritance of TOPA, we conduct TOPA and TOPA-AAW with {\bf only one iteration} at each sampling time during the online prediction.

In addition, taking two real-world datasets in Section \ref{subsection_ushcn} as examples, we discuss the effects of parameter choice for TOPA-AAW. We evaluate the
prediction accuracy of algorithms with the  Normalized Root
Mean Square Error (NRMSE) metric.

\subsection{Synthetic Datasets}\label{subsection_synthetic}

In this subsection, we evaluate the performance of TOPA in synthetic low-rank TTS and wireless channel simulations. The numerical results indicate that TOPA and TOPA-AAW have evident advantage on time-consuming and prediction accuracy.

\textbf{Synthetic Low-Rank Streaming TTS: }
We generate a low-rank noisy streaming TTS  with a similar process as \citet[Sec. 6.3]{synthetic_ref}. The low-rank TTS structure follows the joint Tucker decomposition. We first generate the core tensor series $\{\bmg_t\in\mathbb{C}^{4\times 4\times 4}\}_{t=1}^{70}$ with ARIMA$(3,1,0)$ model. Then, we  generate the joint feature subspace matrices $\{\bmu_m\in\mathbb{U}^{20\times 4}\}_{m=1}^3$, by orthogonalizing randomly generated matrices with i.i.d. $\mathcal{N}(0,1)$ entries. The generated noisy TTS are formulated as
\begin{equation}\label{synthetic_tts}    \bmx_t=\bmg_t\prod_{m=1}^M\times_m\bmu_m+\rho\lVert\bmg_t\rVert_F\bme_t\in\mathbb{C}^{20\times 20\times 20},
\end{equation}
where $\rho=0.1$ is the noise parameter and $\bme_t$ is the noise tensor with i.i.d. $\mathcal{N}(0,1)$ entries. Note that we have $\lVert\bmg_t\rVert_F=\lVert\bmg_t\prod_{m=1}^M\times_m\bmu_m\rVert_F$ and $\rho$ represents the ratio of noise in TTS.

The generated TTS with noise is divided into two sets: the first $T_0=20$ data in the training set, and the rest $50$ in the test set. The regularization parameters for TOPA-AAW are set as $\varphi = 10, \alpha = 0.4, \beta = 0.4$. All numerical results are shown in Figure \ref{synthetic_msre} and Table \ref{synthetic_time}.

Figure \ref{synthetic_msre} compares the average NRMSE of different prediction methods with $10000$ Monte Carlo experiments, each independently and randomly generates a TTS as \eqref{synthetic_tts}.  The ``TOPA-init" line in Figure \ref{synthetic_msre} shows that prediction error increases as prediction continues without model online updating due to the noises in TTS. With the online updating manner, TOPA reveals almost the same performance as MCAR, while BHT-ARIMA has a little advantage over these two methods. Two neural-network-based methods do not perform as well as other algorithms, due to the implicit regression patterns of the core tensor series.

From Table \ref{synthetic_time}, it is easy to see that among all these methods, TOPA and TOPA-AAW are much more efficient owing to the one-loop algorithms, while also keep high prediction accuracy similar to the offline methods.

\begin{figure}[!htbp]
    \centering    \includegraphics[width=0.7\textwidth,height=\textheight,keepaspectratio]{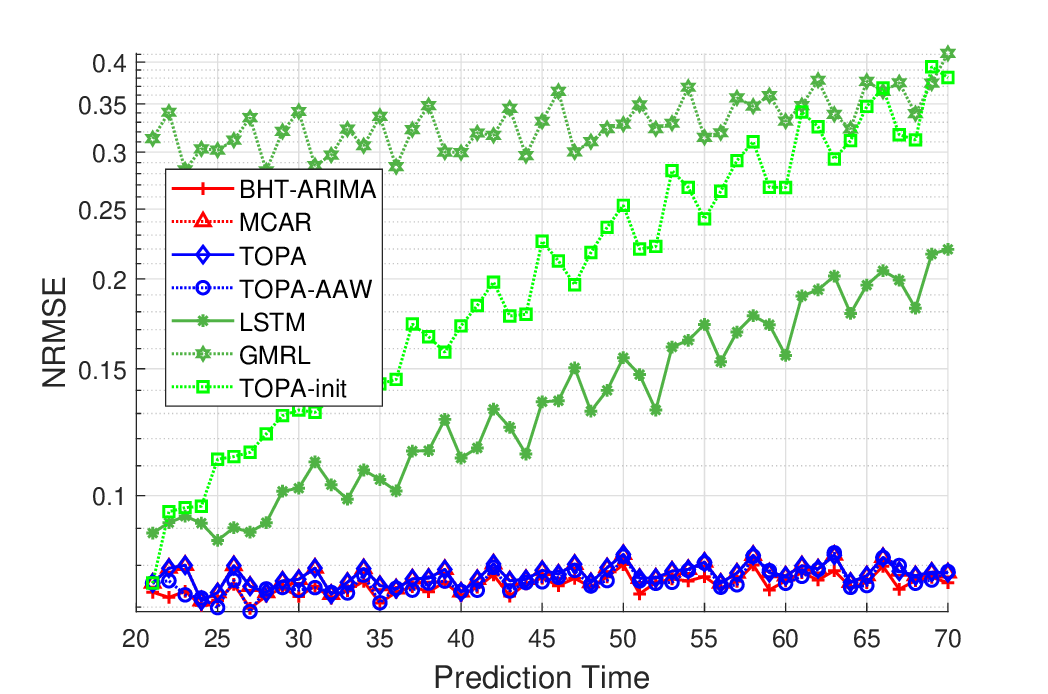}\vspace{-3mm}
    \caption{Performance of different prediction methods on synthetic TTS \eqref{synthetic_tts}. TOPA and TOPA-AAW reveal almost the same performance as the offline methods.}
    \label{synthetic_msre}
\end{figure}

\begin{table}[!h]
\centering
    \caption{Average  time costs and NRMSE of different algorithms on synthetic TTS }\label{synthetic_time}
\resizebox{0.8\textwidth}{!}{\begin{tabular}{ccccccc}
\hline
         & TOPA  & TOPA-AAW       & MCAR  & BHT-ARIMA & GMRL & LSTM  \\ \hline
Time(ms) & \underline{47.8} & \textbf{25.1} & 77.4 & 238.4 & 200.7 & 72.2 \\
NRMSE &  0.0776  &  \underline{0.0760}  &  0.0778  &  \textbf{0.0750}  &  0.3299  &   0.1390  \\
\hline
\end{tabular}}
\end{table}

\textbf{Wireless CSI Prediction:} We consider the wireless channel prediction problem with high-frequency observation and wild fluctuation of statistical features. For fifth-generation wireless communication, many massive multiple-input multiple-output (MIMO) transmission techniques highly rely on accurate CSI acquisition, such as precoding and beamforming. However, the observation of CSI and later data processing usually quickly become outdated due to the short coherent time. CSI prediction is a natural way to  tackle this issue. For time-varying channels, the online manner is important to predict CSI in upcoming time slots.

We use QuaDRiGa \citep{quadriga}  to generate realistic CSI streaming data for single base-station (BS) and single user-equipment (UE) downlink transmission. BS and UE are equipped with $16$ and $2$ antennas, respectively. The transmission occupies the 700MHz frequency with 10MHz bandwidth and 50 subcarriers. During the CSI observation, BS is static, and UE moves with $1.5$m/s speed. The CSI observation frequency is configured as $f_s=25$Hz, so we sample CSI per $40$ms. Then, the CSI observation produces a streaming TTS $\{\bmx_t\in\mathbb{C}^{16\times 2\times 50}\}_{t=1}^\infty$, of which $(\bmx_t)_{ijk}$ denotes the channel state at time $t$ on $k$-th subcarrier between  BS's $i$-th antenna and UE's $j$-th  antenna.

We run $1000$ times Monte Carlo (MC) simulations to evaluate the performance of our proposed algorithms. For each MC simulation, we randomly select a continuous part of CSI TTS with $70$ samples and let the first $T_0=20$ CSI tensors be the training set, while the last $50$ tensors are the streaming data observed in the following $50$ sampling time. The scale of core tensors is set as $(10,2,20)$, which significantly compresses the TTS data. The regularization parameters for the residual of tensor decomposition are set as $\varphi=10, \alpha=0.9, \beta=0.6$. Since the CSI fluctuates wildly, we shorten the time sliding window in TOPA-AAW to $\tau=8$. All prediction methods employ ARIMA$(2,1,1)$ model for building temporal relations among core tensor series. To further improve the adaptability of TOPA/TOPA-AAW for the time-varying property of CSI, we conduct two iterations instead in the online updating step of Algorithm \ref{algorithm_main}. All numerical results are shown in Figure \ref{channel} and Table \ref{timechannel}.

\begin{figure}[ht]
    \centering    \includegraphics[width=0.7\textwidth,height=\textheight, keepaspectratio]{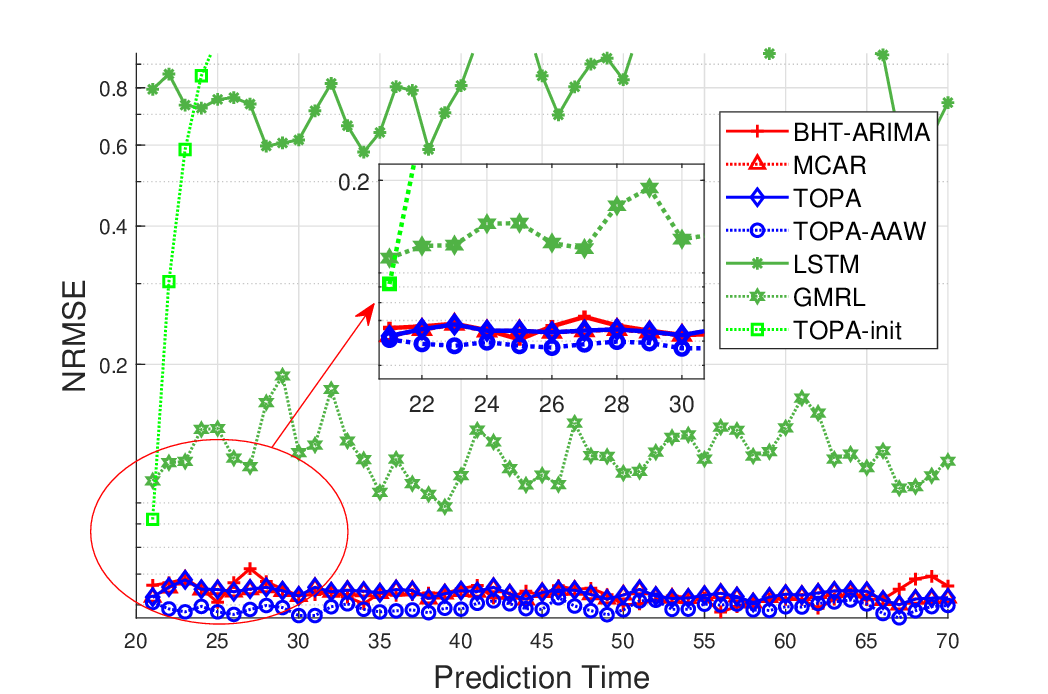}
    \caption{Performance of different prediction methods on CSI prediction. TOPA-AAW has the best performance, while TOPA performs as well as the offline methods.}
    \label{channel}
\end{figure}

\begin{table}[ht]
\centering
    \caption{Average Time costs and NRMSE of different algorithms on CSI prediction}\label{timechannel}
\resizebox{0.85\textwidth}{!}{\begin{tabular}{ccccccc}
\hline

         & TOPA  & TOPA-AAW       & MCAR  & BHT-ARIMA & GMRL & LSTM  \\ \hline
Time(ms) &
     186.8 &  \underline{37.6} & 915.8 &  2634.7 & \textbf{22.6} & 50.7 \\
NRMSE & 0.0636  &  \textbf{0.0591} & 0.0635  &  0.0634        & 0.1295    & 0.9170   \\
     \hline
\end{tabular}}
\end{table}
Figure \ref{channel} illustrates the average NRMSE of CSI online prediction for different prediction methods. The ``TOPA-init" line in Figure \ref{channel} reveals the extreme change in the wireless environment, even in low-speed mobility scenarios. As an online method, TOPA shows very close performance to two offline methods, while the prediction accuracy of TOPA-AAW is much better than TOPA and MCAR, owing to the good adaptation of AAW to the wireless channel evolution.

Table \ref{timechannel} presents the average time costs and NRMSE of six methods in each prediction. It should be emphasised that the training of the GMRL and LSTM methods cost up to a few minutes, which is not considered in Table \ref{timechannel}.  TOPA-AAW is much more efficient than other methods except GMRL,  while has the best prediction performance. Table \ref{timechannel} illustrates that the difference in the number of necessary alternative iterations for building predictors leads to a significant difference in the time costs between online and offline methods.

\subsection{Real-world Datasets: USHCN and NASDAQ100}\label{subsection_ushcn}

In this subsection, We  apply our prediction algorithm to two real-world datasets:
\begin{itemize}
    \item \textit{USHCN}\footnote{This dataset is in https://www.ncei.noaa.gov/pub/data/ushcn/v2.5/}: this dataset records the monthly climate data of 1218 weather stations in the United States during the past 100 years, including four statistical features: monthly mean maximum temperature, monthly mean minimum temperature, monthly minimum temperature, and monthly total precipitation. 120 meteorological stations with relatively complete data are screened for numerical experiments. With their quarterly average observation data from 1940 to 2014, we establish a $75$-length TTS with the scale $120\times 4\times 4$ of each tensor data.
    \item \textit{NASDAQ100}\footnote{This dataset is in https://github.com/Karin-Karlsson/stockdata}: This dataset records the daily business data of 102 NASDAQ-listed companies over 90 days in 2014, including five statistical features: opening quotation, highest quotation, lowest quotation, closing quotation and adjusting the closing price. Therefore, we obtain a 90-length TTS with the scale $102\times 5$ of each matrix data.
\end{itemize}

We divide the screened USHCN TTS into two parts: the first $T_0=40$ data are used for finding the initial predictor and predicting the data at sampling time $41$, and the last $35$ data are regarded as the streaming observation of USHCN TTS. In terms of the discussion in \citet[Sec. V]{moar}, we choose AR(2) model to build the temporal correlations among core tensor series. In the structure of joint tensor decomposition, the scale of core tensors is given as $12\times 4\times 4$.  Since the statistical characteristics of data are nearly stable, we configure a wide time sliding window in TOPA-AAW with length $\tau=20$.

For the NASDAQ100 dataset, we find the initial predictor with the first $T_0=60$ traffic flow data, and observe the last $30$ data stream. The ARIMA$(3,1,0)$ model captures the statistical characteristics in NASDAQ100 TTS. The time sliding window used in TOPA-AAW is configured as $\tau=20$.

We first research the effects of different choices of $\alpha$ and $\varphi$ in \eqref{optim_aaw}. Figure \ref{para_err} shows the average NRMSE of the proposed TOPA-AAW algorithm with different values of parameters $\varphi$ and $\alpha$. Regarding Figure \ref{para_err} (a), we can see that the best $\varphi$ of USHCN and NASDAQ100 datasets are  0.2 and 20, respectively. For both two datasets, the prediction performance is unsatisfactory when $\varphi$ is relatively small, which manifests the necessity of regularization. For stable TTS, small $\varphi$ makes the regularization term close to $0$ with sufficiently accurate joint Tucker decomposition, hence the objective function in \eqref{optim_aaw} is dominated by the first term, which leads to inaccurate joint decomposition with online prediction continuing. Furthermore, when $\varphi$ is large enough, the accuracy of TOPA-AAW is improved to a stable level. From  Figure \ref{para_err} (b), the best  $\alpha$ for USHCN and NASDAQ100 datasets is close to $1$, which illustrates the effectiveness of reducing the weight of stale data. The prediction performance is extremely poor when $\alpha$ is set as $1$, since the regularization in \eqref{optim_aaw} with $\alpha=1$ only retains the last term with time $t=T+1$. In brief, when $\alpha<1$ and $\varphi$ are large enough, the accuracy of TOPA-AAW is not very sensitive to the choice of these parameters, which allows us to choose the parameter more flexibly. In the following experiments, we configure the regularization parameters for TOPA-AAW as: $\varphi=0.5$ and $\alpha=0.98$ for USHCN, and $\varphi=20$ and $\alpha=0.99$ for NASDAQ100. Moreover, as discussed in Remark \ref{remark_beta}, we set $\beta=0.5$.

\begin{figure}[!ht]
	\centering
\includegraphics[width=0.7\textwidth,height=\textheight,keepaspectratio]{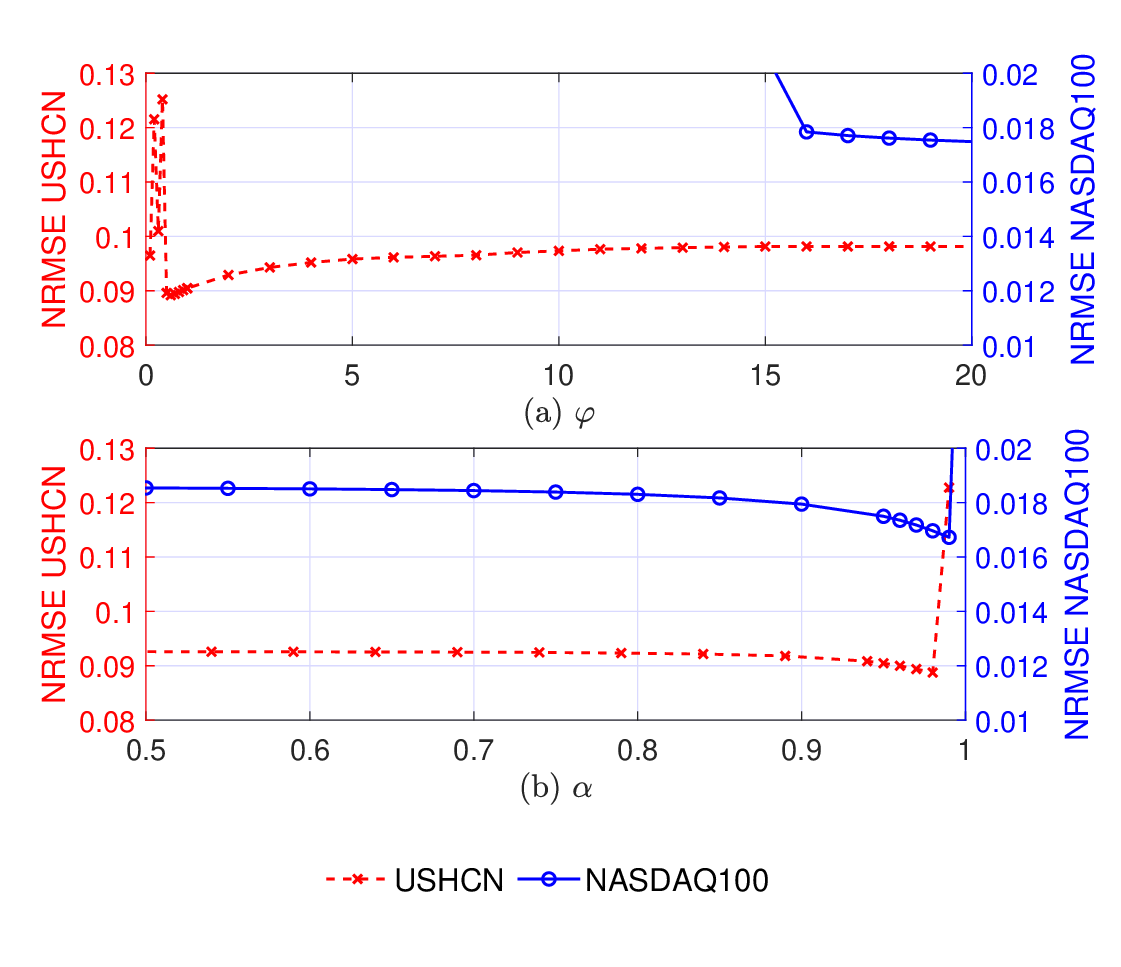}\vspace{-4mm}
	\caption{Effect of parameter $\varphi$ and $\alpha$ for the TOPA-AAW algorithm on two real-world datasets: (a) average NRMSE with $\alpha=0.95$ and different $\varphi$, and (b) average NRMSE with $\varphi_{\text{UHSCN}}=0.5,$ $\varphi_{\text{NASDAQ100}}=20$ and different $\alpha$. When $\alpha<1$ and $\varphi$ are large enough, the accuracy of TOPA-AAW is not sensitive to the choice of these parameters. }
	\label{para_err}
\end{figure}

Figure \ref{core_size} and Table \ref{time_coresize_USHCN} show the average NRMSE and time costs of the proposed TOPA-AAW algorithm with different scales of core tensors, respectively. We implement the experiments with three different sizes of the core tensors: large scale with dimensions of $80\times 4\times 4$, medium scale with dimensions of $12\times 4\times 4$, and small scale with dimensions of $3\times 3\times 3$. The results show that a larger scale of core tensors leads to higher prediction accuracy with more time costs. Therefore, choosing the right size of core tensor is a trade-off between prediction accuracy and time costs.

\begin{figure}[!ht]
    \centering    \includegraphics[width=0.7\textwidth,height=\textheight, keepaspectratio]{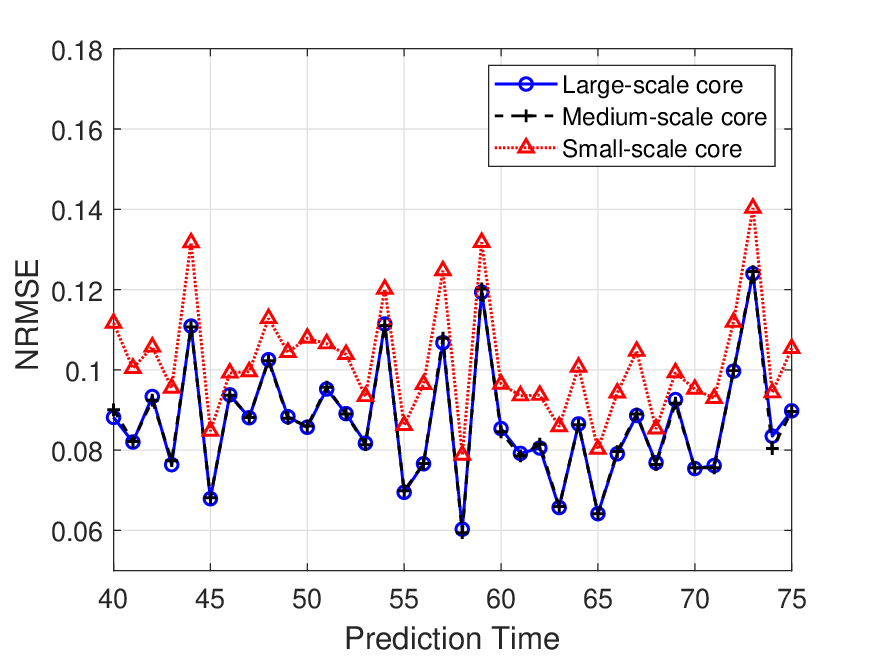}
    \caption{Performance of different sizes of core tensors for TOPA-AAW algorithm on USHCN dataset.}
    \label{core_size}
\end{figure}
\begin{table}[!h]
\centering
    \caption{Average time costs of different sizes of core tensors for TOPA-AAW algorithm on USHCN dataset}\label{time_coresize_USHCN}
\resizebox{0.65\textwidth}{!}{\begin{tabular}{cccc}
\hline
         & Large scale  &  Medium scale       & Small scale    \\ \hline
Time(ms) & 8.7 & 5.1 & 4.8 \\ \hline
\end{tabular}}
\end{table}

Next we focus on the impact of the length of the time sliding window on the prediction performance of TOPA-AAW. As shown in Figure \ref{Time_window}, a longer sliding time window  (with $\tau = 5$ and $\tau = 10$) leads to better prediction performance. On the other side, the complexity analysis in Section \ref{subsection_complexity} proposes that the computational complexity of TOPA-AAW is positively proportional to the time window length $\tau$. With the trade-off between prediction accuracy and time costs, we choose the length of time sliding window $\tau = 20$ in the following experiments.

\begin{figure}[!ht]
    \centering
    \includegraphics[width=0.65\textwidth,height=\textheight, keepaspectratio]{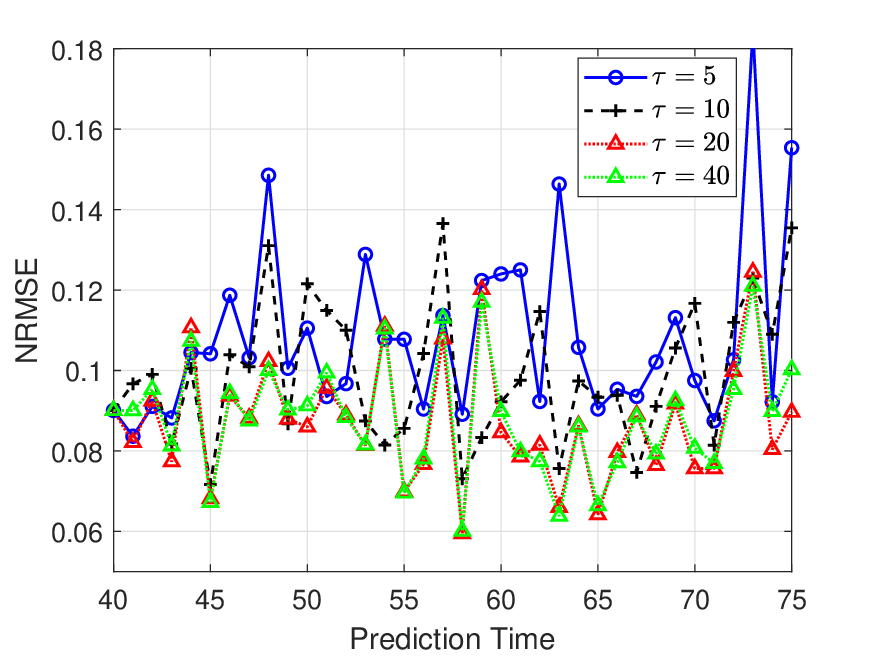}\vspace{-4mm}
    \caption{Performance of different sliding time window lengths for TOPA-AAW algorithm on USHCN dataset.}
    \label{Time_window}
\end{figure}

Figure \ref{ushcn} depicts the comparison results of prediction performance. TOPA reveals nearly the same accuracy for two real-world datasets as offline methods, though it runs much fewer iterations than MCAR in each prediction. TOPA-AAW can further improve the accuracy of TOPA by tracking the latest statistical patterns in TTS. In the NASDAQ100 dataset, the prediction accuracy of TOPA-AAW is nearly $20\%$ better than TOPA and MCAR, while three methods show significant advantages over BHT-ARIMA and two neural-network-based methods.

\begin{figure}[!ht]
    \centering    \includegraphics[width=0.65\textwidth,height=\textheight,keepaspectratio]{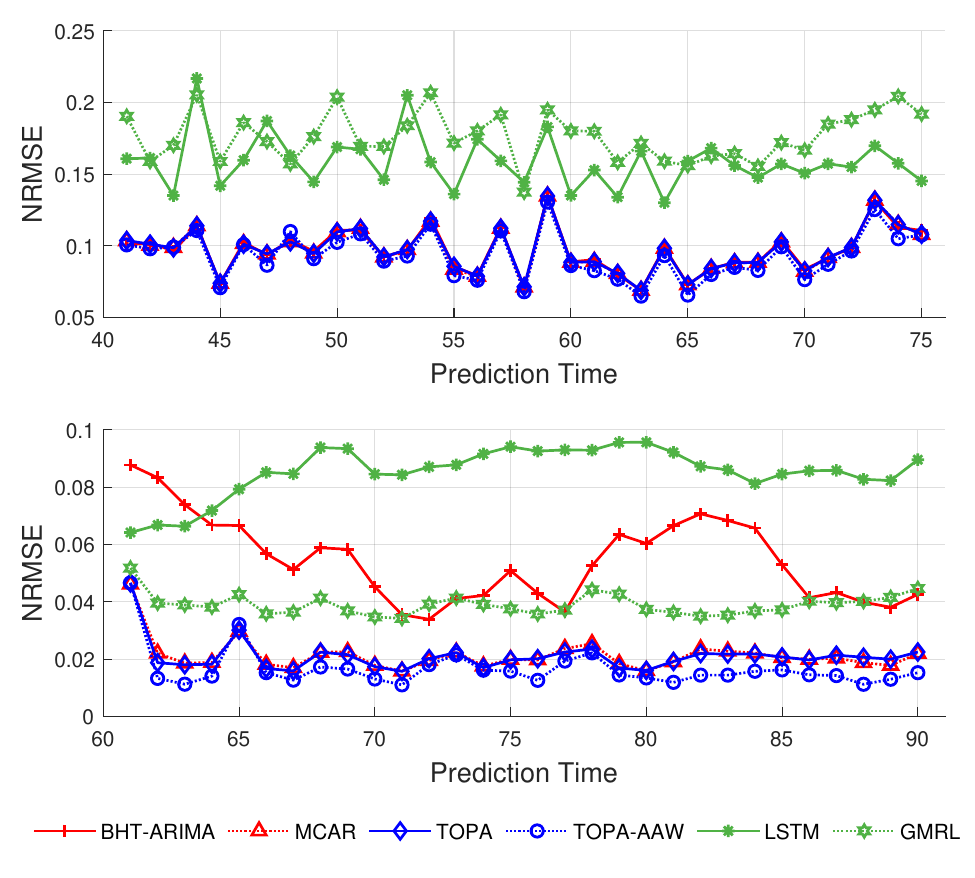}\vspace{-4mm}
    \caption{Performance of different prediction methods on two real-world datasets: (a) USHCN, and (b) NASDAQ100. TOPA-AAW shows the best performance owing to AAW.}
    \label{ushcn}
\end{figure}

Table \ref{timeUSHCN} presents the average time costs of six prediction methods. Owing to the online manner, TOPA/TOPA-AAW can provide prediction results efficiently and accurately.
\begin{table}[!ht]
\centering
    \caption{Average time costs of different algorithms on USHCN and NASDAQ100 datasets}\label{timeUSHCN}
\resizebox{0.9\textwidth}{!}{\begin{tabular}{ccccccc}
\hline

Time(ms)         & TOPA  & TOPA-AAW       & MCAR  & BHT-ARIMA & GMRL & LSTM  \\ \hline
USHCN & 59.2 & \textbf{5.1} & 328.7 & 1583.1 & {15.0} & 34.6 \\ \hline
NASDAQ100 & 53.5 & \underline{11.4} & 263.0 & 419.2 & \textbf{6.9} & 26.2 \\ \hline
\end{tabular}}
\end{table}

\section{Conclusion}\label{sectioncon_clusion}

In this paper, we present a joint-tensor-factorization-based online prediction algorithm for streaming tensor time series. By leveraging tensor factorization, our algorithm effectively compresses the streaming data while capturing the underlying intrinsic correlations. The proposed online updating scheme significantly enhances the speed of predictor updating and can maintain high prediction accuracy. We also analyze the convergence of the proposed algorithm. Additionally, we introduce automatically adaptive weights to address the challenge of data staleness in streaming data. Through the numerical experiments in various scenarios, we observe promising results that validate the effectiveness and efficiency of our approach.

\section*{Acknowledgements}
{The author Liping Zhang  was supported by the National Natural Science Foundation of China (Grant No. 12171271).}

\vskip 0.2in
\bibliography{Efficient_Online_Prediction_for_High-Dimensional_Time_Series_via_Joint_Tensor_Tucker_Decomposition}

\begin{thebibliography}{29}
\providecommand{\natexlab}[1]{#1}
\providecommand{\url}[1]{\texttt{#1}}
\expandafter\ifx\csname urlstyle\endcsname\relax
  \providecommand{\doi}[1]{doi: #1}\else
  \providecommand{\doi}{doi: \begingroup \urlstyle{rm}\Url}\fi

\bibitem[Ahn et~al.(2021)Ahn, Kim, and Kang]{ahn2021accurate}
Dawon Ahn, Seyun Kim, and U~Kang.
\newblock Accurate online tensor factorization for temporal tensor streams with
  missing values.
\newblock In \emph{Proceedings of the 30th ACM International Conference on
  Information \& Knowledge Management}, pages 2822--2826, 2021.

\bibitem[Chen et~al.(2011)Chen, Anderson, Deistler, and Filler]{ywequation}
Weitian Chen, Brian~DO Anderson, Manfred Deistler, and Alexander Filler.
\newblock Solutions of yule-walker equations for singular ar processes.
\newblock \emph{Journal of Time Series Analysis}, 32\penalty0 (5):\penalty0
  531--538, 2011.

\bibitem[Chen and Sun(2021)]{bayes}
Xinyu Chen and Lijun Sun.
\newblock Bayesian temporal factorization for multidimensional time series
  prediction.
\newblock \emph{IEEE Transactions on Pattern Analysis and Machine
  Intelligence}, 44\penalty0 (9):\penalty0 4659--4673, 2021.

\bibitem[De~Lathauwer et~al.(2000)De~Lathauwer, De~Moor, and
  Vandewalle]{de2000multilinear}
Lieven De~Lathauwer, Bart De~Moor, and Joos Vandewalle.
\newblock A multilinear singular value decomposition.
\newblock \emph{SIAM journal on Matrix Analysis and Applications}, 21\penalty0
  (4):\penalty0 1253--1278, 2000.

\bibitem[Deng et~al.(2023)Deng, Deng, Jiang, and Song]{deng2023learning}
Jiewen Deng, Jinliang Deng, Renhe Jiang, and Xuan Song.
\newblock Learning gaussian mixture representations for tensor time series
  forecasting.
\newblock \emph{arXiv preprint arXiv:2306.00390}, 2023.

\bibitem[Hamilton(2020)]{arima}
James~D Hamilton.
\newblock \emph{Time Series Analysis}.
\newblock Princeton University Press, 2020.

\bibitem[Hochreiter and Schmidhuber(1997)]{hochreiter1997long}
Sepp Hochreiter and J{\"u}rgen Schmidhuber.
\newblock Long short-term memory.
\newblock \emph{Neural Computation}, 9\penalty0 (8):\penalty0 1735--1780, 1997.

\bibitem[Jaeckel et~al.(2014)Jaeckel, Raschkowski, B{\"o}rner, and
  Thiele]{quadriga}
Stephan Jaeckel, Leszek Raschkowski, Kai B{\"o}rner, and Lars Thiele.
\newblock Quadriga: A 3-d multi-cell channel model with time evolution for
  enabling virtual field trials.
\newblock \emph{IEEE Transactions on Antennas and Propagation}, 62\penalty0
  (6):\penalty0 3242--3256, 2014.

\bibitem[Jing et~al.(2018)Jing, Su, Jin, and Zhang]{moar}
Peiguang Jing, Yuting Su, Xiao Jin, and Chengqian Zhang.
\newblock High-order temporal correlation model learning for time-series
  prediction.
\newblock \emph{IEEE Transactions on Cybernetics}, 49\penalty0 (6):\penalty0
  2385--2397, 2018.

\bibitem[Kaplan and Tichatschke(1998)]{proximal_nonconvex}
Alexander Kaplan and Rainer Tichatschke.
\newblock Proximal point methods and nonconvex optimization.
\newblock \emph{Journal of Global Optimization}, 13:\penalty0 389--406, 1998.

\bibitem[Kim et~al.(2020)Kim, Kim, Lee, Jang, Choi, and Choi]{kim2020massive}
Hwanjin Kim, Sucheol Kim, Hyeongtaek Lee, Chulhee Jang, Yongyun Choi, and Junil
  Choi.
\newblock Massive mimo channel prediction: Kalman filtering vs. machine
  learning.
\newblock \emph{IEEE Transactions on Communications}, 69\penalty0 (1):\penalty0
  518--528, 2020.

\bibitem[Kolda and Bader(2009)]{view}
Tamara~G Kolda and Brett~W Bader.
\newblock Tensor decompositions and applications.
\newblock \emph{SIAM Review}, 51\penalty0 (3):\penalty0 455--500, 2009.

\bibitem[Kwon et~al.(2021)Kwon, Park, Lee, and Shin]{kwon2021slicenstitch}
Taehyung Kwon, Inkyu Park, Dongjin Lee, and Kijung Shin.
\newblock Slicenstitch: Continuous cp decomposition of sparse tensor streams.
\newblock In \emph{2021 IEEE 37th International Conference on Data Engineering
  (ICDE)}, pages 816--827. IEEE, 2021.

\bibitem[Letaief and Zhang(2006)]{streamingcsi}
Khaled~Ben Letaief and Ying~Jun Zhang.
\newblock Dynamic multiuser resource allocation and adaptation for wireless
  systems.
\newblock \emph{IEEE Wireless Communications}, 13\penalty0 (4):\penalty0
  38--47, 2006.

\bibitem[Lewis and Reinsel(1985)]{ar}
Richard Lewis and Gregory~C Reinsel.
\newblock Prediction of multivariate time series by autoregressive model
  fitting.
\newblock \emph{Journal of Multivariate Analysis}, 16\penalty0 (3):\penalty0
  393--411, 1985.

\bibitem[Li and Shahabi(2018)]{li2018brief}
Yaguang Li and Cyrus Shahabi.
\newblock A brief overview of machine learning methods for short-term traffic
  forecasting and future directions.
\newblock \emph{Sigspatial Special}, 10\penalty0 (1):\penalty0 3--9, 2018.

\bibitem[Luo et~al.(2018)Luo, Ji, Wang, Chen, and Li]{csi_intro}
Changqing Luo, Jinlong Ji, Qianlong Wang, Xuhui Chen, and Pan Li.
\newblock Channel state information prediction for 5g wireless communications:
  A deep learning approach.
\newblock \emph{IEEE Transactions on Network Science and Engineering},
  7\penalty0 (1):\penalty0 227--236, 2018.

\bibitem[Lv et~al.(2014)Lv, Duan, Kang, Li, and Wang]{introtrafficprediction}
Yisheng Lv, Yanjie Duan, Wenwen Kang, Zhengxi Li, and Fei-Yue Wang.
\newblock Traffic flow prediction with big data: A deep learning approach.
\newblock \emph{IEEE Transactions on Intelligent Transportation Systems},
  16\penalty0 (2):\penalty0 865--873, 2014.

\bibitem[Maarala et~al.(2015)Maarala, Rautiainen, Salmi, Pirttikangas, and
  Riekki]{streamingtraffic}
Altti~Ilari Maarala, Mika Rautiainen, Miikka Salmi, Susanna Pirttikangas, and
  Jukka Riekki.
\newblock Low latency analytics for streaming traffic data with apache spark.
\newblock In \emph{2015 IEEE International Conference on Big Data}, pages
  2855--2858. IEEE, 2015.

\bibitem[Pascale and Nicoli(2011)]{pems}
Alessandra Pascale and Monica Nicoli.
\newblock Adaptive bayesian network for traffic flow prediction.
\newblock In \emph{2011 IEEE Statistical Signal Processing Workshop (SSP)},
  pages 177--180. IEEE, 2011.

\bibitem[Rockafellar and Wets(2009)]{rock_variational}
R~Tyrrell Rockafellar and Roger J-B Wets.
\newblock \emph{Variational Analysis}, volume 317.
\newblock Springer Science \& Business Media, 2009.

\bibitem[Sharma et~al.(2017)Sharma, Bhuriya, and Singh]{stockprediction}
Ashish Sharma, Dinesh Bhuriya, and Upendra Singh.
\newblock Survey of stock market prediction using machine learning approach.
\newblock In \emph{2017 International Conference of Electronics, Communication
  and Aerospace Technology (ICECA)}, volume~2, pages 506--509. IEEE, 2017.

\bibitem[Shi et~al.(2020)Shi, Yin, Cai, Cichocki, Yokota, Chen, Yuan, and
  Zeng]{bhtarima}
Qiquan Shi, Jiaming Yin, Jiajun Cai, Andrzej Cichocki, Tatsuya Yokota, Lei
  Chen, Mingxuan Yuan, and Jia Zeng.
\newblock Block {H}ankel tensor {ARIMA} for multiple short time series
  forecasting.
\newblock In \emph{Proceedings of the AAAI Conference on Artificial
  Intelligence}, volume~34, pages 5758--5766. AAAI, 2020.

\bibitem[Shih et~al.(2019)Shih, Sun, and Lee]{shih2019temporal}
Shun-Yao Shih, Fan-Keng Sun, and Hung-yi Lee.
\newblock Temporal pattern attention for multivariate time series forecasting.
\newblock \emph{Machine Learning}, 108:\penalty0 1421--1441, 2019.

\bibitem[Sun et~al.(2020)Sun, Guo, Luo, Tropp, and Udell]{synthetic_ref}
Yiming Sun, Yang Guo, Charlene Luo, Joel Tropp, and Madeleine Udell.
\newblock Low-rank tucker approximation of a tensor from streaming data.
\newblock \emph{SIAM Journal on Mathematics of Data Science}, 2\penalty0
  (4):\penalty0 1123--1150, 2020.

\bibitem[Tan et~al.(2016)Tan, Wu, Shen, Jin, and Ran]{completionprediction}
Huachun Tan, Yuankai Wu, Bin Shen, Peter~J Jin, and Bin Ran.
\newblock Short-term traffic prediction based on dynamic tensor completion.
\newblock \emph{IEEE Transactions on Intelligent Transportation Systems},
  17\penalty0 (8):\penalty0 2123--2133, 2016.

\bibitem[Wang et~al.(2024)Wang, Zheng, and Li]{wang2024high}
Di~Wang, Yao Zheng, and Guodong Li.
\newblock High-dimensional low-rank tensor autoregressive time series modeling.
\newblock \emph{Journal of Econometrics}, 238\penalty0 (1):\penalty0 105544,
  2024.

\bibitem[Yu et~al.(2016)Yu, Rao, and Dhillon]{matrix_factorization}
Hsiang-Fu Yu, Nikhil Rao, and Inderjit~S Dhillon.
\newblock Temporal regularized matrix factorization for high-dimensional time
  series prediction.
\newblock \emph{Advances in Neural Information Processing Systems}, 29, 2016.

\bibitem[Zivot and Wang(2006)]{var}
Eric Zivot and Jiahui Wang.
\newblock Vector autoregressive models for multivariate time series.
\newblock \emph{Modeling Financial Time Series with S-PLUS{\textregistered}},
  pages 385--429, 2006.

\end{thebibliography}

\end{document}